\theoremstyle{plain}
\newtheorem{thm}{Theorem}[section]
\newtheorem{prop}[thm]{Proposition}
\newtheorem{fact}[thm]{Fact}
\newtheorem{cor}[thm]{Corollary}
\newtheorem{lem}[thm]{Lemma}
\theoremstyle{definition}
\newtheorem{defn}[thm]{Definition}
\newtheorem{exa}[thm]{Example}
\newtheorem{rem}[thm]{Remark}
\newenvironment{pf}{\begin{proof}}{\end{proof}}
\def\S{$\mathsection$}
\newcommand{\N}{\mathbb N}
\newcommand{\nin}{\notin}
\newcommand{\ov}[1]{\overline{#1}}
\newcommand{\sN}{\mathbb N}
\newcommand{\en}[1]{[#1]}
\newcommand{\bv}{\symbol{124}}
\DeclareMathOperator{\dom}{dom}
\DeclareMathOperator{\num}{num}
\DeclareMathOperator{\vnum}{numv}
\DeclareMathOperator{\env}{env}
\DeclareMathOperator{\Env}{Env}
\def\theory#1{\mathrm{#1}}
\DeclareMathOperator{\PA}{\theory{PA}}
\DeclareMathOperator{\ZF}{\theory{ZF}}
\DeclareMathOperator{\el}{el}
\DeclareMathOperator{\val}{val}
\DeclareMathOperator{\sat}{Sat}
\let\true\ilmondoeblu
\DeclareMathOperator{\true}{True}
\DeclareMathOperator{\fm}{Fm}
\DeclareMathOperator{\tm}{Tm}
\DeclareMathOperator{\fexists}{EXISTS}
\DeclareMathOperator{\fand}{AND}
\DeclareMathOperator{\fnot}{NOT}
\let\feq\unpacchettodicrec
\DeclareMathOperator{\feq}{EQUALS}
\DeclareMathOperator{\f+}{PLUS}
\DeclareMathOperator{\fx}{TIMES}
\DeclareMathOperator{\fS}{SUCC}
\DeclareMathOperator{\fvar}{VAR}
\DeclareMathOperator{\fmod}{MODEL}
\DeclareMathOperator{\fval}{VAL}
\newcommand{\gnn}[1]{\overline{\ulcorner #1 \urcorner}}
\newcommand{\gn}[1]{\ulcorner #1 \urcorner}
\newcommand{\dt}[1]{\accentset{\bdot}{#1}}
\newcommand{\bdot}{{\mbox{\large\bfseries .}}}
\newcommand{\liff}{\leftrightarrow}
\def\nvdash{\not\vdash}
\def\nmodels{\not\models}
\begin{document}
	
		\title[Provability logic: models within models in Peano Arithmetic]{Provability logic: models within models in Peano
			Arithmetic
		}
		
\author{Alessandro Berarducci and Marcello Mamino}

\thanks{Partially supported by the Italian research project PRIN 2017, ``Mathematical logic: models, sets, computability'', Prot. 2017NWTM8RPRIN}
\address{Dipartimento di Matematica, Università di Pisa, Largo Bruno Pontecorvo 5, 56127 Pisa, Italy}
\email{alessandro.berarducci@unipi.it}
\address{Dipartimento di Matematica, Università di Pisa, Largo Bruno Pontecorvo 5, 56127 Pisa, Italy}
\email{marcello.mamino@unipi.it}

		\keywords{Provability logic, Peano Arithmetic. Incompleteness theorems. Modal logic}

		\begin{abstract}In 1994 Jech gave a model-theoretic proof of G\"odel's second incompleteness theorem for Zermelo-Fraenkel set theory in the following form: $\ZF$ does not prove that $\ZF$ has a model. Kotlarski showed that Jech's proof can be adapted to Peano Arithmetic with the role of models being taken by complete consistent extensions. 
			In this note we take another step in the direction of replacing proof-theoretic by model-theoretic arguments. 	We show, without the need of formalizing the proof of the completeness theorem within $\PA$, that the existence of a model of $\PA$ of complexity $\Sigma^0_2$  is independent of $\PA$, 
			where a model is identified with the set of formulas with parameters which hold in the model. Our approach is based on a new interpretation of the provability logic of Peano Arithmetic where $\Box \phi$  is defined as the formalization of ``$\phi$ is true in every $\Sigma^0_2$-model''.
\end{abstract}		
		
		\maketitle
		
		
		\begin{minipage}{\textwidth}
			\tableofcontents
		\end{minipage}
		\vskip2\baselineskip
		
		\section{Introduction}

The precise statement of G\"odel's second incompleteness theorem,
informally that $\PA$ cannot prove its own consistency,
depends upon the choice of an arithmetization of the sentence
``$\PA$ is consistent''. G\"odel, sketching the proof in his seminal 1930
paper~\cite{Godel31}, elected to formalize consistency as syntactic consistency.
This is by no means the only reasonable choice, as demonstrated by Thomas
Jech's remarkably short proof~\cite{Jech1994} of a model
theorethic version of the theorem for~$\ZF$. Namely that $\ZF$ cannot
prove that~$\ZF$ has a model. For arithmetic,
Jech shows how to transfer his $\ZF$ argument to~$\PA$ by means of a
conservativity result~\cite[Remark 2]{Jech1994}. Then, work by Kotlarski
adapts Jech's technique~\cite[\S 3.7]{Kotlarski2019} to obtain a direct
proof: the idea is to replace models with complete theories and use the
Hilbert-Bernays arithmetized completeness theorem.
In this note, we take another step in the direction of
replacing proof-theoretic by model-theoretic arguments: we will intend
consistency to mean that $\PA$ has models of arithmetic complexity
$\Sigma^0_2$.

Taking advantage of the fact that $\PA$ has partial truth predicates for formulas of bounded complexity, we show that the existence of a model of $\PA$ of complexity $\Sigma^0_2$  is independent of $\PA$ (Theorem \ref{thm:main}), where a model is identified with the set of formulas with parameters which are true in the model. 
The presence of parameters is what makes it possible to express Tarski's
truth conditions and do away with the arithmetized completeness theorem, as
well as any formalized notion of syntactic consistency.		
For the reader that might be interested
in comparing our approach to other proofs of G\"odel's incompleteness
theorems, we may suggest~\cite{Kotlarski2004,Kotlarski2019,Kaye1991}. 

		In our approach, we first define a $\Pi^0_3$~predicate $\fmod(x)$ expressing the fact that $x$ is a code for a $\Sigma^0_2$-model of $\PA$. We then consider an arithmetical interpretation of modal logic where $\Box \phi$ formalizes the fact that the formula $\phi$ holds in every $\Sigma^0_2$-model of $\PA$. The formula $\Box \phi$ is in fact provably equivalent to the $\Sigma^0_1$ formalization of the provability predicate ``$\PA\vdash \phi$'', but since in our formalization we want to avoid the syntactic notion of provability, we are not going to use this fact. Thus, on the face of it, $\Box \phi$ has complexity $\Pi^0_4$.  Under our interpretation of the modal operator, $\lnot \Box \perp$ says that there is a $\Sigma^0_2$-model of $\PA$, and we will prove that this statement is independent of $\PA$ reasoning as follows. The crucial step is to verify L\"ob's derivability conditions \cite{Lob1995} for our intepretation of the modal operator $\Box$, i.e.\ we need to prove:
		\begin{enumerate}
			\item $\PA \vdash \phi \implies \PA \vdash \Box \phi$ 
			\item $\PA \vdash \Box \phi \to \Box \Box \phi$ 
			\item $\PA \vdash \Box (\phi \to \psi) \to (\Box \phi \to \Box \psi)$ 
		\end{enumerate}
		Here and throughout the paper we write $\PA\vdash \theta$ to mean that $\theta$ is true in any model of $\PA$ (and we write $M\models T$ to mean that $M$ is a model of $T$). 
		The modal counterparts of 1.--3. form the basis of the so called ``provability logic''  \cite{Solovay1976,Boolos1994}. From 1.--3. and the fixed point theorem one can derive $\PA\vdash \Box(\Box \phi\to \phi) \to \Box \phi$, whose modal counterpart is also an axiom of provability logic, see for instance \cite{Verbrugge2017}. 
		
		Under our interpretation, the proof of
		3. is straightforward. 
		To prove 1. suppose there is a model $X$ of $\PA$ where $\Box \phi$ fails. We need to find a model $Z\models \PA$ where $\phi$ fails. We can assume that $X$ is countable and has domain $\N$. By definition there is $y\in X$ such that $X \models \fmod(y)$ and $X\models \text{``}y\models \lnot \phi\text{''}$, namely $X$ thinks that $y$ is a code of a $\Sigma^0_2$-model where $\phi$ fails. Given $X$ and $y$ we are able to construct a model $Z\models \PA$ (with domain $\N$) which satisfies exactly those formulas with parameters $\varphi[s]$ such that $X \models \text{``}y \models \varphi[s]\text{''}$.  In particular $Z\models \lnot \phi$, thus concluding the proof of 1.
		
		Point 2. is the aritmetization of 1., namely we show that there is a function $x,y\mapsto {}^xy$ (of complexity $\Pi^0_3$) which maps, provably in $\PA$, a code $x$ of a $\Sigma^0_2$-model $X$ and a $y$ such that $X\models \fmod(y)$, into a code of a $\Sigma^0_2$-model $Z$ as above (the most delicate part is the mechanism to handle non-standard formulas with a non-standard number of parameters). 
		
		Granted the derivability conditions, we obtain the unprovability of $\lnot \Box \perp$ by  standard methods: we define $G$ such that $\PA\vdash G \liff \lnot \Box G$ we show that $G$ is unprovable and equivalent to $\lnot \Box \perp$.
		Finally, we show 
		\begin{enumerate}
			\item[4.]  $\N \models \Box \phi \implies \PA \vdash \phi$
		\end{enumerate}
		(the opposite direction follows from 1.) and we deduce that the negation of $G$ is also unprovable, hence $\lnot \Box \perp$ is independent of $\PA$. This means that the existence of a model of complexity $\Sigma^0_2$ is independent of $\PA$. 
		
		For the proof of 4. suppose that $\PA\nvdash \phi$. Then there is a $\Sigma^0_2$-model $M$ of $\PA$ where $\phi$ fails (for a model-theoretic proof of this fact see Fact \ref{fact:kleene}). A code $m\in \N$ of $M$ withnesses the fact that $\N\nmodels \Box \phi$. 
		
		\section{Primitive recursive functions} \label{sect:natural}
		The language of $\PA$ has function symbols $0,S,+,\cdot$ for zero, successor, addition, and multiplication. The axioms of $\PA$ are those of Robinson's arithmetic $\theory Q$ plus the first-order induction scheme. The standard model of $\PA$ is the set $\N$ of natural numbers with the usual interpretation of the symbols.  
		
		If $t$ is a closed term of $\PA$ and $M$ is a model of $\PA$, let $t^M\in M$ be the value of $t$ in $M$. 
		If $n\in \N$, let $\ov n = S^n(0)$ be the numeral for $n$. In the standard model $\N$ the value of $\ov n$ is $n$.
		If $f:\N\to \N$ is a primitive recursive function then (using G\"odel's $\beta$-function)  $f$ can be represented by a $\Sigma^0_1$-formula $\psi(x,y)$ of $\PA$ in such a way that, forall $m,n\in \N$ we have:
		\begin{enumerate}
			\item  $f(m) = n \implies \PA \vdash \psi(\ov m, \ov n)$
			\item  $f(m) \neq n \implies \PA \vdash \lnot \psi(\ov m, \ov n)$
			\item $\PA \vdash \forall x \exists ! y \psi(x,y)$ 
		\end{enumerate}
		and similarly for $n$-ary functions. In the above situation we shall often write $f(x) = y$ as shorthand for the formula $\psi(x,y)$.  
		Given a model $M$ of $\PA$, with our notational conventions, we have 
		$$f(m) = n \iff M \models f(\ov m) = \ov n.$$ 
		We recall that an element of $M$ is standard if it is the value of some numeral, i.e. it is of the form ${\ov n}^M$ for some $n\in \N$.   If we identify $n\in \N$ with ${\ov n}^M\in M$, then 1.--3. say that $\psi$ defines an extension of $f:\N\to \N$ to a function $f:M\to M$.  
		In general $\psi$ can be chosen to satisfy additional
properties which depend on the way $f$ is presented as a primitive
recursive function. Consider for instance the function $f(n) = 2^n$
presented via the functional equations $2^0 = 1$ and $2^{n+1} = 2^n 2$.
Then $\psi$ can be chosen in such a way that $\PA \vdash \forall x (
2^{x+1} = 2^x \cdot 2)$, where $2^x$ is defined within $\PA$ as the unique
$y$ such that $\psi(x,y)$. With this choice of $\psi$, in any model $M$ of
$\PA$, the functional equation $2^{x+1} = 2^x 2$ continues to hold for
non-standard values of $x$, thus $\psi(x,y)$ determines (by the induction
scheme of $\PA$) a unique definable extension of the function $n\in
\N\mapsto 2^n\in \N$ to the non-standard elements. In general, two different
presentations of the same primitive recursive function determine different
extensions to the non-standard elements, unless $\PA$ is able to show that
the two representations are equivalent. A representation is natural if
$\PA$ proves the validity of the same functional equations that are used
in the presentation of the function in the metatheory. 
		We shall always assume that the primitive recursive functions we consider are represented in $\PA$ in a natural way. 
		Given a formula $\phi(x)$ of $\PA$ and a primitive
recursive function $f$, we will feel free to write $\phi(f(x))$ as a
short-hand for the formula $\exists y (f(x) = y \land \phi(y))$, where
``$f(x) = y$'' stands for the formula $\psi(x,y)$ that we have chosen to
represent $f$ inside $\PA$. So, for instance, it makes sense to write
$\phi(2^x)$ although the language of $\PA$ does not have a symbol for the
exponential function. Using similar conventions, we may act as if the
language of $\PA$ had been enriched with a symbol for each primitive
recursive function, or, more precisely, for each primitive recursive
presentation of a function. 
		
		
		We fix an effective G\"odel numbering of terms and
formulas of $\PA$ and we write $\gn{\phi}\in \N$ for the G\"odel number of $\phi$. 
		In the next section we will introduce various primitive recursive functions involved in the formalization of syntactic notion. 
		We use $x_0, x_1, x_2, \ldots$ as formal variables of $\PA$, but we also use other letters (such as $x,y,z,t$) as metavariables standing for formal variables.
		
		\section{Arithmetization}
		The content of this section is entirely standard, but we include it to fix the notations. 
		
		\begin{prop}
			There are primitive recursive functions $\fS$,
$\f+$, $\fx$, $\fvar$, which are increasing in both arguments, such that: 
			\begin{itemize}
				\item $\fS(\gn{t}) = \gn{S(t)}$
				\item $\f+ (\gn{t_1},\gn{t_2}) = \gn{t_1+t_2}$ 
				\item $\fx (\gn{t_1},\gn{t_2}) = \gn{t_1 \cdot t_2}$
				\item $\fvar(i) = \gn{x_i}$ 
			\end{itemize}
			where $t,t_1,t_2$ are terms and $i\in \N$. 
		\end{prop}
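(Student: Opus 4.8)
The plan is to make the four identities hold \emph{by construction}, by taking the G\"odel numbering fixed at the end of the previous section to be a monotone encoding of abstract syntax trees rather than of strings. Fix a primitive recursive bijection $\N^2\to\N$, written $(x,y)\mapsto\langle x,y\rangle$, that is strictly increasing in each argument separately — for definiteness the Cantor pairing $\langle x,y\rangle=\frac{(x+y)(x+y+1)}{2}+y$ — and assign to the constructors $0$, $x_\bullet$, $S$, $+$, $\cdot$ the distinct tags $0,1,2,3,4$, setting
\begin{gather*}
\gn{0}=\langle 0,0\rangle,\qquad \gn{x_i}=\langle 1,i\rangle,\qquad \gn{S(t)}=\langle 2,\gn t\rangle,\\
\gn{t_1+t_2}=\langle 3,\langle \gn{t_1},\gn{t_2}\rangle\rangle,\qquad \gn{t_1\cdot t_2}=\langle 4,\langle \gn{t_1},\gn{t_2}\rangle\rangle.
\end{gather*}
Since $\langle\cdot,\cdot\rangle$ is injective and the tags are pairwise distinct, distinct terms receive distinct codes and the set of codes is primitive recursive, so this is a legitimate effective numbering (it extends to formulas by allotting further tags to $=$, the connectives and the quantifiers). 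I would then simply \emph{define} the required functions to mirror these clauses:
\begin{gather*}
\fvar(i)=\langle 1,i\rangle,\qquad \fS(u)=\langle 2,u\rangle,\\
\f+(u,v)=\langle 3,\langle u,v\rangle\rangle,\qquad \fx(u,v)=\langle 4,\langle u,v\rangle\rangle.
\end{gather*}

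With these definitions the four displayed identities of the Proposition are immediate from the recursion clauses, and every function is primitive recursive, being a composition of the pairing function with constants. The only remaining point is monotonicity, which I expect to be the one place where a naive choice of numbering would cause trouble. Here it is trivial: as $w\mapsto\langle k,w\rangle$ is strictly increasing for each fixed tag $k$, the unary functions $\fvar$ and $\fS$ are strictly increasing; and since $u\mapsto\langle u,v\rangle$ and $v\mapsto\langle u,v\rangle$ are strictly increasing, composing them with the strictly increasing outer map $w\mapsto\langle k,w\rangle$ shows that $\f+$ and $\fx$ are strictly increasing in each argument separately. In particular each code strictly dominates the codes of its immediate subterms, a by-product that is convenient for later inductions on term structure carried out inside $\PA$.

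The reason I prefer the tree encoding is precisely this monotonicity clause. With a string/concatenation based numbering the code of $t_1+t_2$ interleaves the digits of $\gn{t_1}$ and $\gn{t_2}$ at weights depending on the length of $t_1$, so increasing one argument need not increase the whole code; recovering ``increasing in both arguments'' would then require a positional base-$b$ encoding with explicit length bookkeeping, which is the genuinely delicate part. Encoding abstract syntax trees through a strictly monotone pairing sidesteps this entirely, reducing the whole statement to the elementary monotonicity of $\langle\cdot,\cdot\rangle$. If one wished to retain some other previously fixed numbering, the same conclusion could be transported along a primitive recursive bijection between the two sets of codes, but fixing the monotone encoding from the start is cleanest.
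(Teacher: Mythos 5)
Your construction is correct, and it is exactly the standard argument that the paper leaves implicit: the Proposition is stated without proof as part of a section whose content is declared ``entirely standard,'' and choosing the G\"odel numbering to be a monotone encoding of syntax trees via a pairing function that is strictly increasing in each argument makes the four identities hold by definition and reduces monotonicity of $\fS$, $\f+$, $\fx$, $\fvar$ to that of the pairing. One small caveat: your closing aside that the conclusion ``could be transported along a primitive recursive bijection between the two sets of codes'' is not right as stated, since an arbitrary bijection need not preserve order and hence need not preserve the monotonicity clause; but this does not affect your main argument, which fixes the monotone numbering from the outset.
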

		The above functions can be naturally represented in $\PA$ by $\Sigma^0_1$-formulas, so they have a natural extension (denoted by the same names) to non-standard models of $\PA$. 
		By formalizing the recursive definition of the class of terms inside $\PA$ we obtain: 
		
		\begin{prop}
			There is a formula $\tm(x)\in \Sigma^0_1$ such that $\PA$ proves that, for all $x$, $\tm(x)$ holds if and only if one and only one of the following alternatives holds:
			\begin{itemize}
				\item  $\exists i \; \; x = \fvar(i)$
				\item $x = \gnn{0}$
				\item $\exists a \; \tm (a) \land x = \fS(a)$
				\item $\exists a,b \; \tm (a) \land \tm(b) \land x = \f+(a,b)$
				\item $\exists a,b \; \tm (a) \land \tm(b) \land x = \fx(a,b)$ 
			\end{itemize}
			Since the class of (codes of) terms is a primitive recursive, under the natural formalization both $\tm(x)$ and its negation are equivalent, in $\PA$, to $\Sigma^0_1$-formulas. 
		\end{prop}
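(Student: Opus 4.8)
The plan is to take $\tm(x)$ to be the natural $\Sigma^0_1$ formalization of the primitive recursive parsing relation ``$x$ is a code of a term'', and then to read off both the five-fold recursion and the ``one and only one'' clause from unique readability of terms, using crucially that $\fS$, $\f+$, $\fx$, $\fvar$ are strictly increasing.

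First I would exhibit the relation ``$x$ codes a term'' as primitive recursive. The idea is course-of-values recursion: $x$ is a term iff it equals $\gnn 0$, or $\fvar(i)$ for some $i$, or has one of the forms $\fS(a)$, $\f+(a,b)$, $\fx(a,b)$ with $a,b$ themselves terms. Because the coding functions are strictly increasing in each argument, the code of a compound term strictly exceeds those of its immediate constituents, so any proper subterm $a$ (resp.\ $a,b$) of $x$ satisfies $a<x$ (resp.\ $a,b<x$); hence the recursive calls are on strictly smaller arguments, the search for a decomposition of $x$ ranges only over numbers $<x$, and the whole procedure is primitive recursive. By Section~\ref{sect:natural}, a primitive recursive relation and its complement are both primitive recursive, and each is therefore represented naturally in $\PA$ by a $\Sigma^0_1$ formula; this already yields the final sentence of the statement, namely that $\tm(x)$ and $\lnot\tm(x)$ are both equivalent in $\PA$ to $\Sigma^0_1$-formulas.

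Next I would establish the biconditional inside $\PA$. Naturalness of the representation means that $\PA$ proves the defining equations of the recursion, which is exactly the asserted equivalence of $\tm(x)$ with the disjunction of the five alternatives: the backward direction appends $x$ to the formation sequence(s) of its immediate constituents (or uses a one-element sequence in the two base cases), while the forward direction inspects the last entry of a formation sequence for $x$ and extracts formation sequences for the constituents as initial segments. The bounds $a,b<x$ supplied by monotonicity are what allow these arguments to be carried out by $\Sigma^0_1$-induction in $\PA$, so that the equivalence holds for possibly nonstandard $x$ as well.

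The main obstacle is the ``one and only one'' clause, i.e.\ unique readability formalized in $\PA$. This amounts to showing that the five cases are mutually exclusive and that the decomposition within each case is unique: the images of $\fvar$, $\fS$, $\f+$, $\fx$ and the singleton $\{\gnn 0\}$ are pairwise disjoint, and each constructor is injective. Both facts are built into the Gödel numbering --- the outermost symbol is recorded by a tag in the code, and distinct terms receive distinct codes --- while the strict monotonicity of the functions provides the cancellation needed to recover the arguments from a compound code. I would verify disjointness of the cases and injectivity of the constructors as primitive recursive facts in the metatheory and transfer them to $\PA$ via their natural representation, thereby obtaining the exclusivity of the five alternatives and completing the proof.
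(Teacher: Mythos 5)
Your proposal is correct and follows the same route the paper intends: the paper states this proposition without proof as ``entirely standard,'' obtained ``by formalizing the recursive definition of the class of terms inside $\PA$,'' which is precisely your course-of-values recursion bounded by the monotonicity of $\fS$, $\f+$, $\fx$, $\fvar$, together with the natural $\Sigma^0_1$ representation of primitive recursive relations and their complements from Section~\ref{sect:natural}. Your treatment of the ``one and only one'' clause via provable disjointness of the constructor images and injectivity of the constructors is the standard unique-readability argument and is exactly what the fixed G\"odel numbering is assumed to support.
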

		
		\begin{cor}
			For every term $t$ of $\PA$, $\PA \vdash \tm(\gnn{t})$.
		\end{cor}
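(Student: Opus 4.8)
The plan is to argue by induction on the structure of $t$, carried out externally in the metatheory rather than inside $\PA$: each term $t$ is built from the variables $x_i$ and the constant $0$ by finitely many applications of $S$, $+$, $\cdot$, and I will produce, following this build-up, an explicit $\PA$-proof of $\tm(\gnn t)$. At every stage I will invoke the characterization of $\tm$ from the preceding proposition, namely that $\PA$ proves $\tm(x)$ to be equivalent to the disjunction of the five listed alternatives, together with the naturality of the representations of $\fS$, $\f+$, $\fx$, $\fvar$, which guarantees that the defining identities continue to hold provably once standard inputs are replaced by their numerals, for instance $\PA \vdash \fS(\gnn s) = \gnn{S(s)}$.

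First I would dispatch the base cases. If $t = x_i$, then naturality gives $\PA \vdash \gnn{x_i} = \fvar(\ov i)$, so $\PA \vdash \exists j\; \gnn{x_i} = \fvar(j)$; this is exactly the first alternative, whence $\PA \vdash \tm(\gnn{x_i})$. If $t = 0$, the second alternative $x = \gnn 0$ holds trivially at $x = \gnn 0$, so $\PA \vdash \tm(\gnn 0)$.

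For the inductive step, consider $t = S(s)$ and assume the induction hypothesis $\PA \vdash \tm(\gnn s)$. Since $\PA \vdash \gnn{S(s)} = \fS(\gnn s)$, instantiating the third alternative with the witness $a := \gnn s$ yields $\PA \vdash \exists a\,(\tm(a) \land \gnn{S(s)} = \fS(a))$, hence $\PA \vdash \tm(\gnn{S(s)})$. The cases $t = s_1 + s_2$ and $t = s_1 \cdot s_2$ are handled identically, using respectively $\PA \vdash \gnn{s_1 + s_2} = \f+(\gnn{s_1},\gnn{s_2})$ and $\PA \vdash \gnn{s_1 \cdot s_2} = \fx(\gnn{s_1},\gnn{s_2})$, the fourth and fifth alternatives, and the two induction hypotheses on $s_1$ and $s_2$.

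I do not expect a genuine obstacle here, since the argument is a routine structural induction; the only points that require care are to keep this external induction distinct from an internal induction inside $\PA$, and to track the difference between the G\"odel number $\gn t \in \N$ and the numeral $\gnn t$ naming it whenever the naturality of the representing functions is applied.
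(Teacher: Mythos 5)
Your proof is correct and is exactly the routine external structural induction that the paper leaves implicit by stating this corollary without proof: the base and inductive cases follow from the five alternatives in the characterization of $\tm$ together with the provable identities $\PA \vdash \fvar(\ov i) = \gnn{x_i}$, $\PA \vdash \fS(\gnn s) = \gnn{S(s)}$, etc., which come from the representability conditions for these primitive recursive functions. Your closing caveats (metatheoretic versus internal induction, G\"odel number versus numeral) are precisely the right points of care.
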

		
		We have analogous propositions for the codes of formulas.  
		\begin{prop} There are primitive recursive functions
$\fnot$, $\fand$, $\fexists$, $\feq$, which are increasing in both
arguments, such that:
			\begin{itemize}
				\item $\fnot (\gn{\phi}) = \gn{\lnot \phi}$
				\item $\fand(\gn{\phi},\gn{\psi}) = \gn{\phi \land \psi}$
				\item $\fexists(i,\gn{\phi}) = \gn{\exists x_i \phi}$
				\item $\feq(\gn{t_1},\gn{t_2}) = \gn{t_1=t_2}$
			\end{itemize}
			where $\phi,\psi$ are formulas, $t_1,t_2$ are terms, and $i\in \N$. 
		\end{prop}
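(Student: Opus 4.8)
The plan is to realize each of the four functions as a syntactic concatenation operation on a suitably chosen sequence-based G\"odel numbering, exactly as was done implicitly for $\fS$, $\f+$, $\fx$ in the previous proposition. Concretely, I would pin down the effective numbering to be the coding in which a string of symbols $s_0 s_1 \cdots s_{k-1}$ is sent to $\prod_{j<k} p_j^{s_j+1}$, where $p_j$ is the $j$-th prime and each primitive symbol of the language has been assigned a fixed positive code. The associated concatenation function $a * b$, which from the codes of two strings returns the code of their concatenation, is primitive recursive; moreover it is \emph{strictly increasing in each argument}, since lengthening a sequence or increasing one of its entries strictly increases the product above.

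First I would define the four functions explicitly in terms of $*$ and the fixed codes of the logical symbols $\lnot$, $\land$, $\exists$, $=$ and of the parentheses. Writing $c_\lnot, c_\land, c_\exists, c_{=}$ for the one-symbol codes and using $\fvar$ to produce the code of the variable $x_i$, one sets, for example, $\fnot(a) = c_\lnot * a$, $\fand(a,b) = c_\land * a * b$, $\fexists(i,a) = c_\exists * \fvar(i) * a$, and $\feq(a,b) = a * c_{=} * b$ (with the precise placement of parentheses dictated by the chosen grammar). Each of these is a composition of primitive recursive functions, hence primitive recursive, and a direct check against the definition of the numbering verifies the four displayed equations: feeding in the codes $\gn{\phi}, \gn{\psi}$ of genuine formulas (or $\gn{t_1}, \gn{t_2}$ of terms) yields precisely the code of the intended compound expression.

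The one point that requires care --- and the only place where the choice of numbering actually matters --- is the requirement that the functions be increasing in both arguments. This is where I would concentrate the effort: monotonicity fails for an arbitrary effective numbering, but it is inherited from the strict monotonicity of $*$ under the sequence coding above, since each function is built by inserting its arguments (and $\fvar(i)$, which is itself increasing) into fixed positions of a concatenation. I expect this to be the main, albeit still routine, obstacle. The reason monotonicity is worth securing is that it makes the constructors invertible by bounded search: from $x = \fand(a,b)$ one reads off $a,b < x$, so the relation ``$x$ is the code of a conjunction'' becomes expressible with bounded quantifiers, which is exactly what is needed to obtain the $\Sigma^0_1$ definition of the class of codes of formulas, in the analogue of the proposition characterizing $\tm(x)$.
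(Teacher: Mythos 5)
The paper states this proposition without proof, as a standard arithmetization fact, so your argument has to stand on its own. Your overall plan --- fix a concrete sequence coding, realize the constructors as concatenations with fixed symbol codes, and check the displayed equations --- is the standard one, and it does deliver primitive recursive functions satisfying the four equations. The gap is precisely at the point you identify as the crux. The claim that the concatenation function $a*b$ on prime-power codes is strictly increasing in each argument is false. What your justification (``lengthening a sequence or increasing one of its entries strictly increases the product'') actually establishes is monotonicity with respect to the partial order on \emph{sequences} (extension and pointwise domination), and this does not transfer to the numerical order on codes. Concretely, with the coding $s_0\cdots s_{k-1}\mapsto \prod_{j<k}p_j^{s_j+1}$: let $a=6$ code the string $(0,0)$, let $a'=8$ code the string $(2)$, and let $b=2^{t+1}$ code $(t)$. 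Then $a<a'$, but $a*b=6\cdot 5^{t+1}$ whereas $a'*b=8\cdot 3^{t+1}$, so $a*b>a'*b$ for all $t\geq 2$. The reason is that the prime shift performed by concatenation depends on the \emph{length} of the left factor, not on its size, and a numerically smaller code can belong to a longer string. A symmetric example (a short string with a large entry versus a long string with small entries) breaks monotonicity in the right argument as well.

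There are two standard repairs. (i) Keep the concatenation coding but prove only what is actually used downstream, namely that each constructor strictly dominates its arguments: $\fand(a,b)>a$ and $\fand(a,b)>b$, and likewise for the others. This \emph{does} follow from your coding, since the concatenated string contains each factor's string with its primes shifted upward, and it already suffices to bound the existential quantifiers in the recursive characterizations of $\tm$ and $\fm$; but then you have not proved the proposition as stated. (ii) Replace concatenation by a tree-style coding built from a pairing function that is genuinely increasing in each argument in the numerical order (for instance the Cantor pairing), setting e.g. $\fand(a,b)=\langle c_\land,\langle a,b\rangle\rangle$; monotonicity is then inherited under composition and the displayed equations hold by definition of the numbering. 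Either way the statement is true; it is only your monotonicity lemma for $*$ that must be replaced.
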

		
		The above functions can be naturally represented in $\PA$ by $\Sigma^0_1$-formulas, so they have a natural extension (denoted by the same names) to non-standard models of $\PA$.
		
		\begin{prop}
			There is a formula $\fm(x)\in \Sigma^0_1$ such that $\PA$ proves that, for all $x$, $\fm(x)$ holds if and only if one and only one of the following alternatives holds: 
			\begin{itemize}
				\item $\exists a,b \; \tm(a) \land \tm(b) \land x = \feq(a,b)$
				\item $\exists \phi \; \fm(\phi) \land x = \fnot(\phi)$
				\item $\exists \phi, \psi \; \fm(\phi) \land \fm(\psi) \land x = \fand(\phi,\psi)$ 
				\item $\exists i, \phi \; \fm (\phi) \land x = \fexists(i,\phi)$
			\end{itemize}
			Since the class of (codes of) formulas is primitive recursive, under the natural formalization both $\fm(x)$ and its negation are equivalent, in $\PA$, to $\Sigma^0_1$-formulas. 
		\end{prop}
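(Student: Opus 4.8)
The plan is to mimic the treatment of $\tm(x)$ given just above: define $\fm(x)$ by means of \emph{formation sequences} and then verify the stated recursive characterization inside $\PA$. A formation sequence for $x$ is a finite sequence $\langle s_0,\ldots,s_n\rangle$, coded by a single number via G\"odel's $\beta$-function, such that $s_n = x$ and each entry $s_j$ falls under one of four cases: either $s_j = \feq(a,b)$ for some $a,b$ with $\tm(a)\land\tm(b)$, or $s_j = \fnot(s_i)$ for some $i<j$, or $s_j = \fand(s_i,s_k)$ for some $i,k<j$, or $s_j = \fexists(i,s_k)$ for some $k<j$ and some index $i$. I would take $\fm(x)$ to be the assertion that such a sequence exists. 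Since $\tm$ and the graphs of the four constructors are $\Sigma^0_1$, and $\Sigma^0_1$ formulas are closed under bounded quantification and under existential quantification, the resulting $\fm(x)$ is $\Sigma^0_1$.

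First I would prove, in $\PA$, the backward implication of the characterization. Each of the four alternatives supplies immediate subformulas (or terms) together with their own formation sequences; concatenating these and appending $x$ produces a formation sequence for $x$, so $\fm(x)$ holds. This is a routine manipulation of coded sequences, available through the arithmetization already in force.

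Next comes the forward implication: assuming $\fm(x)$, fix a formation sequence and inspect the justification of its last entry $x$. It matches exactly one of the four clauses, and the immediate constituents occurring there, being earlier entries, inherit truncated formation sequences and hence satisfy $\fm$ (or $\tm$) as appropriate; this yields one of the four alternatives. The clause ``one and only one'' then requires unique readability: the ranges of $\feq$, $\fnot$, $\fand$, $\fexists$ must be pairwise disjoint and each constructor injective on codes, so that the outermost connective of $x$ and its arguments are uniquely recoverable. I would derive these disjointness and injectivity facts, provably in $\PA$ and for possibly non-standard $x$, from the explicit structural properties of the chosen G\"odel numbering, reflected in the monotonicity of the constructors.

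Finally, since the class of codes of formulas is primitive recursive, the framework of Section~\ref{sect:natural} yields a natural $\Sigma^0_1$ representation of its characteristic function; consequently $\lnot\fm(x)$ is likewise provably equivalent in $\PA$ to a $\Sigma^0_1$ formula, i.e.\ $\fm$ is $\Delta^0_1$ over $\PA$. The main obstacle is the forward direction together with uniqueness: one must verify \emph{inside} $\PA$, and for non-standard codes, that every formation sequence decomposes $x$ in a well-defined way. This rests entirely on the unique-readability of the coding, whose provable injectivity and disjoint-range properties are the delicate point; the remainder is bookkeeping on coded sequences.
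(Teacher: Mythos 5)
Your formation-sequence argument is correct and is precisely the standard formalization that the paper has in mind: the paper gives no explicit proof, stating only that the result follows ``by formalizing the recursive definition of the class of formulas inside $\PA$,'' which is exactly what you carry out. Your identification of unique readability (provable disjointness of ranges and injectivity of the constructors) as the point needed for the ``one and only one'' clause is the right observation, and the rest is routine bookkeeping as you say.
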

		
		\begin{cor}
			For every formula $\phi$, $\PA\vdash \fm(\gnn{\phi})$. 
		\end{cor}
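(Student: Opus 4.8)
The plan is to argue by induction on the syntactic build-up of $\phi$ in the metatheory, exactly paralleling the proof of the corresponding corollary for terms. The engine of the argument is the recursive characterization of $\fm$ supplied by the preceding proposition: since $\PA$ proves, for \emph{all} $x$, that $\fm(x)$ holds precisely when one of the four listed clauses holds, in particular $\PA$ proves this biconditional with $x$ instantiated to the numeral $\gnn{\phi}$. Thus, to establish $\PA\vdash\fm(\gnn{\phi})$, it suffices in each case of the induction to exhibit within $\PA$ explicit (standard) witnesses making the appropriate clause true.

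For the base case $\phi$ is atomic, say $\phi$ is $t_1=t_2$. Here the defining identity $\feq(\gn{t_1},\gn{t_2})=\gn{t_1=t_2}$ is a true statement about natural numbers, so by the natural-representation property~(1) of Section~\ref{sect:natural} we get $\PA\vdash \feq(\gnn{t_1},\gnn{t_2})=\gnn{\phi}$. Combining this with the already-established $\PA\vdash\tm(\gnn{t_1})$ and $\PA\vdash\tm(\gnn{t_2})$, the witnesses $a=\gnn{t_1}$, $b=\gnn{t_2}$ verify the first clause, so $\PA\vdash\fm(\gnn{\phi})$. The three inductive clauses are handled the same way. If $\phi$ is $\lnot\psi$, the induction hypothesis gives $\PA\vdash\fm(\gnn{\psi})$, and property~(1) applied to $\fnot(\gn{\psi})=\gn{\lnot\psi}$ gives $\PA\vdash\fnot(\gnn{\psi})=\gnn{\phi}$, so the witness $\gnn{\psi}$ verifies the second clause. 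The cases $\phi=\psi\land\chi$ and $\phi=\exists x_i\,\psi$ are identical, using the representations of $\fand$ and $\fexists$ together with the induction hypothesis applied to the immediate subformulas.

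I do not expect a genuine obstacle; the only points requiring care are bookkeeping ones. First, one must consistently pass from the true numeric identities defining $\fnot,\fand,\fexists,\feq$ on standard G\"odel numbers to the corresponding $\PA$-provable identities between the numerals $\gnn{\cdot}$, which is exactly what property~(1) licenses. Second, one must keep the external induction (on the finitely many construction steps of the fixed metatheoretic formula $\phi$) cleanly separated from the internal fixed-point definition of $\fm$: the induction never runs inside $\PA$, it merely chains together, for each $\phi$, a finite $\PA$-proof assembled from the proofs for its subformulas. Granting these conventions, the verification is routine.
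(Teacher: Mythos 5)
Your proof is correct and is precisely the standard external induction on the build-up of $\phi$ that the paper leaves implicit (it states the corollary without proof, as it does for the analogous corollary about $\tm$). The two points you flag — using representability property (1) to convert true numeric identities about G\"odel numbers into $\PA$-provable identities between numerals, and keeping the metatheoretic induction separate from the internal characterization of $\fm$ — are exactly the right ones.
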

		
		
		\begin{defn}\label{defn:fm}
			If $M$ is a model of $\PA$ and $\phi\in M$ is such that $M \models \fm(\phi)$, we will say that $\phi$ is an arithmetized formula in the model $M$. Similarly, an arithmetized term of $M$ is an element $a\in M$ such that $M \models \tm(a)$. 
		\end{defn}
		
		If $\psi$ is a formula of $\PA$ in the metatheory, then $\gnn{\psi}^M$ is an arithmetized formula of $M$, but if $M$ is non-standard there are arithmetized formulas which are not of this form. Similarly, if $t$ is a term of $\PA$, then $\gnn{t}^M$ is a arithmetized term of $M$, and if $M$ is non-standard it will also contain non-standard arithmetized terms.

		\section{$\Sigma^0_2$-models}
		In this section we define a $\Sigma^0_2$-model as a model $M$ with domain $\N$ such that the set of formulas with parameters which are true in the model is $\Sigma^0_2$-definable (so the standard model $(\N,0,S,+,\cdot)$ is not $\Sigma^0_2$-definable). We proceed below with the formal definitions. 
		
		An infinite sequence of natural numbers $(a_n)_n$ is finitely supported if there is $k\in \N$ such that $a_n = 0$ for all $n\geq k$. There is a bijection between natural numbers and finitely supported sequences of natural numbers: it suffices to map $s\in \N$ to the sequence of the exponents appearing in the prime factorization $\Pi_k p_k^{a_k}$ of $s+1$ (where  $p_0 = 2, p_1 = 3, p_2 = 5$ and in general $p_k$ is the $k+1$-th prime). 
		
		\begin{defn}[PA]\label{defn:el} Given $s,k$, 
			let $\el(s,k)$ be the least $a$ such that
$p_k^{a+1}$ does not divide $s+1$. According to the definition, $$s+1 = \Pi_k p_k^{\el(s,k)}$$ where  $\Pi_k p_k^{\el(s,k)}$ can be regarded as a finite product since all but finitely many factors are equal to $1$.  		Note that $\el(s,k)$ is a primitive recursive function of $s,k$. 
		\end{defn}

\begin{rem}[PA]\label{rem:el}
The coding of finitely supported sequences defined above is injective
\[s_1 = s_2 \;\;\leftrightarrow\;\; \forall k\; \el(s_1,k) = \el(s_2,k)\]
\end{rem}
		
		\begin{prop}[PA]\label{prop:subs}
			Given $s,a,k$, there is a unique  $t$, denoted $s[a/k]$,  such that $\el(t,i) = \el(s,i)$ for all $i\neq k$ and $\el(t,k) = a$. 
			
			Note that $s[a/k]$ is a primitive recursive function of $s,a,k$.
		\end{prop}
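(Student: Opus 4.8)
The plan is to construct $t$ explicitly by surgery on the prime factorization of $s+1$, then read off the two required properties from uniqueness of factorization, and finally invoke Remark \ref{rem:el} for uniqueness. Everything takes place inside $\PA$, so I will only use facts—divisibility, primality, unique factorization—that $\PA$ proves.

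First recall that, by Definition \ref{defn:el}, $\el(s,k)$ is exactly the exponent of $p_k$ in $s+1$: we have $p_k^{\el(s,k)}\mid s+1$ while $p_k^{\el(s,k)+1}\nmid s+1$. In particular the quotient $q=(s+1)/p_k^{\el(s,k)}$ is a well-defined positive integer, coprime to $p_k$. I would then set
\[
t+1 = q\cdot p_k^{a},
\]
i.e. $t = q\cdot p_k^{a}-1$; since $q\geq 1$ and $p_k^{a}\geq 1$ we have $t+1\geq 1$, so $t$ is a genuine natural number. All the operations involved—computing $p_k$, exponentiation, the primitive recursive function $\el$, exact division, multiplication, and predecessor—are primitive recursive, so $(s,a,k)\mapsto t$ is primitive recursive and the displayed equation is provable in $\PA$.

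Next I would verify the defining conditions from unique factorization. For $i\neq k$: since $q$ is coprime to $p_k$ and we only multiplied by a power of $p_k$, the exact power of $p_i$ dividing $t+1$ equals the power of $p_i$ dividing $q$, which equals the power of $p_i$ dividing $s+1$ (stripping off $p_k^{\el(s,k)}$ does not affect $p_i$); hence $\el(t,i)=\el(s,i)$. For $i=k$: because $q$ is coprime to $p_k$, the exact power of $p_k$ dividing $t+1=q\cdot p_k^{a}$ is $p_k^{a}$, so $\el(t,k)=a$. Finally, uniqueness is immediate: if $t$ and $t'$ both satisfy $\el(\cdot,i)=\el(s,i)$ for $i\neq k$ and $\el(\cdot,k)=a$, then $\el(t,i)=\el(t',i)$ for every $i$, and Remark \ref{rem:el} (injectivity of the coding) yields $t=t'$.

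I expect the only mildly delicate point to be carrying out the elementary number theory—exact divisibility of $s+1$ by $p_k^{\el(s,k)}$ and coprimality of the quotient $q$ with $p_k$—formally within $\PA$ rather than in the metatheory. These are standard consequences of the $\PA$-provable unique factorization theorem together with the defining property of $\el$, so no real obstacle arises; the argument is essentially a bookkeeping of prime exponents.
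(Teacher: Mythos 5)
Your proof is correct. The paper states Proposition~\ref{prop:subs} without any proof, treating it as a standard fact about the prime-power coding of sequences; your construction $t+1=\bigl((s+1)/p_k^{\el(s,k)}\bigr)\cdot p_k^{a}$, verified by tracking prime exponents via unique factorization and with uniqueness supplied by the injectivity of the coding (Remark~\ref{rem:el}), is exactly the argument the authors implicitly rely on, and it formalizes in $\PA$ without difficulty.
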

		
		We will consider countable models $M$ of $\PA$. We can assume that all such models have domain $\N$, but the intepretation of the function symbols $0,S,+,\cdot$ will in general differ from the standard one. 
		
		\begin{defn}\label{defn:environment}
			Let $M = (\N; \;0_M,S_M,+_M,\cdot_M)$ be a model of $\PA$ with domain $\N$.  If $\phi$ is a formula in the language of $\PA$ and $s\in \N$ we write $$M\models \phi \en{s}$$ to express the fact that $\phi$ holds in $M$ in the environment coded by $s$, i.e. the environment which, for each $i$, assigns the value $\el(s,i)$ to the variable $x_i$. For simplicity we take as a basis of logical connectives $\lnot,\land, \exists$ (negation, conjunction, existential quantification). The universal quantifier~$\forall$ and the logical connectives~$\wedge$ and~$\to$ are defined in terms of $\lnot,\land, \exists$ in the usual way. Tarski's truth conditions then take the following form:  
			\begin{itemize}
				\item $M\models (\exists x_i \phi) \en{s} \iff \; \text{ there is} \; x\in \N \; \text{such that}\; M \models \phi (s[x/i])$
				\item $M\models (\phi \land \psi) \en{s} \iff M\models \phi \en{s} \; \text{and} \; M \models \psi \en{s}$
				\item $M\models (\lnot \phi) \en{s} \iff M\nmodels \phi \en{s}$
				\item $M \models (t_1= t_2) \en{s} \iff \val(t_1,M,s) = \val(t_2,M,s)$
			\end{itemize}
			where $\val(t,M,s)$ is the value of the term $t$ in the model $M$ when variables are evaluated according to $s$, namely $\val(x_i,M,s) = \el(s,i)$.
			
		\end{defn}
		If $\phi$ is closed (it has no free variables),  then the validity of a formula $\phi$ in $M$ does not depend on the environement: $M\models \phi \en{s} \iff M\models \phi\en{0}$. In this case we may write $M\models \phi$ for $M\models \phi\en{0}$. Occasionally we make use of the  connective $\perp$ standing for ``false''. Thus for every $M$ we have $M\nmodels \perp$. 
		
\begin{defn}\label{defn:complexity} Let $M$ be a model of $\PA$ with
domain $\N$. We say that $M$ is a $\Sigma^0_2$-model if the set of pairs
$(\gn{\phi},s)\in \N\times \N$ such that $M\models \phi\en{s}$ is an
arithmetical set of complexity~$\Sigma^0_2$. 

For a technical reason, which
will be clarified in the comments before Lemma \ref{lem:Env}, we assume that the constant~$0$ is interpreted
in~$M$ with the element~$0\in\N$, namely $0_M=0$. 
\end{defn}
		
		We recall that a set of natural numbers is $\Delta^0_2$ if both the set and its complement can be defined by a $\Sigma^0_2$-formula.  Notice that a $\Sigma^0_2$-model is in fact automatically $\Delta^0_2$.  We will need the following fact. 
		
		\begin{fact} \label{fact:kleene}
Let $T$ be a recursively axiomatized theory without finite models. If $T$
has a model, then $T$ has a model whose elementary diagram has arithmetic
complexity~$\Delta^0_2$.
		\end{fact}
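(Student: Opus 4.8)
The plan is to run a Henkin construction relativized to the halting problem, which is the model-theoretic content of Kleene's basis theorem for $\Pi^0_1$ classes. Since $T$ has a model it is consistent. First I would expand the language $L$ of $T$ by a countable set of fresh Henkin constants $c_0,c_1,\dots$ and add, for each $L^+$-formula $\exists x\,\psi$ (here $L^+$ is the expanded language), a Henkin axiom $\exists x\,\psi \to \psi(c)$ with a fresh constant $c$. Call the resulting theory $H$; it is recursively axiomatized in $L^+$ and, by Henkin's lemma, still consistent.

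Next I would complete $H$ to a complete consistent theory $T^*$ in the expanded language. Fix a computable enumeration $\phi_0,\phi_1,\dots$ of all $L^+$-sentences and build an increasing chain of finite consistent extensions of $H$: at stage $n$, having a finite consistent $S_n \supseteq H$, decide $\phi_n$ by putting $\phi_n$ into $S_{n+1}$ if $S_n \cup \{\phi_n\}$ is consistent and $\lnot\phi_n$ otherwise. The key observation for the complexity bound is that ``$S_n \cup \{\phi_n\}$ is consistent'' is a $\Pi^0_1$ predicate, its negation ``some finite subset proves a contradiction'' being $\Sigma^0_1$; hence each decision can be made with an oracle for the halting set $\emptyset'$. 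Since the construction is uniform, the completion $T^* = \bigcup_n S_n$ is computable in $\emptyset'$, i.e. $T^* \in \Delta^0_2$, and it is complete, consistent, and retains every Henkin axiom.

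Then I would take the term model $M$ of $T^*$, whose elements are the equivalence classes of closed $L^+$-terms under $s \sim t \iff (s=t)\in T^*$, with the symbols of $L$ interpreted in the obvious way; the Henkin axioms guarantee that $M\models T^*\supseteq T$. Because $T$ has no finite models, $M$ is infinite, and since $L^+$ is countable $M$ is countably infinite, so we may transport its domain onto $\N$. Here the relation $s\sim t$ is just $(s=t)\in T^*$, hence $\Delta^0_2$, so canonical representatives of the classes can be enumerated $\Delta^0_2$-uniformly and the identification of the domain with $\N$ is $\Delta^0_2$. Finally, a formula with parameters from $M$ is coded by a formula together with an assignment of canonical closed terms to its free variables, and it holds in $M$ exactly when the corresponding substituted $L^+$-sentence belongs to $T^*$; since membership in $T^*$ is $\Delta^0_2$, the elementary diagram of $M$ is $\Delta^0_2$.

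The only genuinely delicate point is the complexity bookkeeping: one must check that consistency is \emph{uniformly} $\Pi^0_1$, so that the whole completion — and not merely each individual decision — is $\emptyset'$-computable, and that passing to the term model and to a domain identified with $\N$ does not raise the complexity above $\Delta^0_2$. Everything else (Henkin's lemma, the Lindenbaum-style completion, and the verification that the term model satisfies $T^*$) is entirely standard.
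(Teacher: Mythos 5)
Your proof is correct, but it takes a genuinely different route from the one in the paper. You run the classical Henkin--Lindenbaum construction relativized to the halting set: Henkinize, complete by deciding each sentence with an $\emptyset'$ oracle (using that consistency of a finite extension of a recursively axiomatized theory is uniformly $\Pi^0_1$), and read off the term model, whose elementary diagram is then $\Delta^0_2$ because membership in the completed theory is. The paper explicitly acknowledges that this derivation is available, but deliberately chooses a different, model-theoretic argument in the spirit of Skolem: after reducing to a $\vec\forall\vec\exists$-axiomatization in a relational language with effective quantifier elimination (Morleyization), it builds a recursive, finitely branching forest of bounded finite approximations ($(S,m)$-structures), observes that the forest is infinite when $T$ has a model, takes the leftmost infinite path (which is $\Delta^0_2$), and obtains the model as the union along that path. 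What your approach buys is brevity and familiarity --- it is essentially the arithmetized completeness theorem in its recursion-theoretic form. What the paper's approach buys is thematic coherence: the whole point of the paper is to replace proof-theoretic notions (provability, syntactic consistency, a fixed proof system) by model-theoretic ones, and its proof of this fact never mentions a proof system at all. Since the fact is used only in the metatheory and never formalized inside $\PA$, either proof suffices for the application; one small wording caveat in yours is that the stages $S_n$ are not literally finite theories but extensions of $H$ by finitely many sentences, which is what makes the uniform $\Pi^0_1$ bound on consistency go through.
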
 
		
		Fact \ref{fact:kleene} can be easily derived from the
usual proof of the completeness theorem based on K\"onig's lemma, together
with the observation that a recursive binary tree with an infinite path
has a $\Delta^0_2$ infinite path (see \cite{Kleene1952,Shoenfield1960}).
We thank the anonymous referee for suggesting that it can also be derived
model-theoretically from Skolem's proof of the existence of countable
models as limits of finite models in \cite{Skolem1922} (see also p. 20-21
of \cite{Wang1970} and related developments in
\cite{Quinsey2019,Shelah1984}). We include a model-theoretic proof below.
We stress that Fact \ref{fact:kleene} will only be used in the metatheory,
namely we do not need to formalize its proof within $\PA$. Moreover, Fact
\ref{fact:kleene} will only be used in the proof of $\PA\nvdash
\Box\perp$, but not in the proof of $\PA\nvdash \lnot \Box \perp$.
		
		\begin{proof}[Proof of Fact \ref{fact:kleene}.] 
			We can assume that $T$ has a $\vec{\forall} \vec{\exists}$-axiomatization, namely it is axiomatized by formulas of the form $\forall \bar x  \exists \bar y \theta(\bar x, \bar y)$ where $\theta$ is quantifier free and $\bar x, \bar y$ are tuples of variables. We can reduce to this situation by expanding the language $L$ of $T$ with the introduction of a new predicate symbol $R_{\varphi}(\bar x)$ for each $L$-formula $\varphi(\bar x)$ together with the following axioms: 
			\begin{itemize}
				\item $R_\varphi(\bar x) \liff \varphi(\bar x)$ for each atomic $\varphi$
				\item $R_{\lnot \varphi}(\bar x) \liff \lnot R_{\varphi}(\bar x)$
				\item $R_{\alpha\land \beta}(\bar x) \liff R_{\alpha}(\bar x) \land R_{\beta}(\bar x)$
				\item $R_{\exists y \varphi}(\bar x) \liff \exists y R_\varphi(\bar x,y)$ 
				\item $R_{\forall y \varphi}(\bar x) \liff \forall y R_\varphi(\bar x,y)$
			\end{itemize}
			(with implicit universal quantifiers over $\bar x$). 
			After such a modification, we can assume that $T$ has effective elimination of quantifiers, a $\vec \forall \vec \exists$-axiomatization, and is formulated in a relational language $L$ (possibly with equality). We need to find a model of $T$ whose atomic diagram is $\Delta^0_2$ (the elementary diagram will then also be $\Delta^0_2$ because $T$ has effective elimination of quantifiers). 
			
			We will construct a $\Delta^0_2$-model of $T$ as a limit of finite models following the ideas of \cite{Skolem1922,Shelah1984} with suitable modifications to handle theories rather than single formulas. We need some definitions. 
			
			Let $S\subseteq L$ be a finite fragment of the language $L$. 
			An $(S,m)$-structure is a finite sequence of $S$-structures $\bar M = (M_0,M_1,\ldots, M_m)$ such that $M_\ell$ is a substructure of $M_{\ell + 1}$ for all $\ell < m$. Given another $(S,m)$-structure $\bar N$, we say that $\bar N$ is an $m$-substructure of $\bar M$ if $N_\ell$ is a substructure of $M_\ell$ for all $\ell \leq m$.  
			
			Let $\varphi := \forall \bar x \exists \bar y \theta$ be a closed formula, with $\theta$ quantifier free. 
			We say that $\varphi$ is a $(p,q)$-formula if the number of $\forall$-quantifiers in $\varphi$ is $p$ and the number of $\exists$-quantifiers is $q$.
			
			If $\bar M$ is a $(S,m)$-structure and $\varphi$ is a closed $(p,q)$-formula in the language $S$, we say that $\bar M$ is an $(S,m)$-model of $\varphi$, if for all $\ell < m$ and for every $a_1,\ldots,a_p \in \dom(M_\ell)$ there are $b_1, \ldots, b_q \in \dom(M_{\ell + 1})$ such that $M_{\ell+1}\models \theta(\bar a, \bar b)$. Note that a $(S,0)$-structure satisfies every closed formula. 
			
			We say that $\bar M$ is $(p,q)$-bounded if $\bv M_0 \bv = 1$ and for all $\ell<m$, 	$\bv M_{\ell+1} \bv \leq \bv M_\ell \bv + q \bv M_{\ell} \bv^{p}$. Note that if $\bar M$ is $(p,q)$-bounded, then it is $(a,b)$-bounded for all $a\geq p, b\geq q$.	
			
			The following facts follow easily from the definitions. The idea of the proof is as in \cite[Claim 1.3]{Shelah1984} with minor adaptations. 
			\begin{enumerate}
				\item If $\varphi$ has a model, then for
every $n\in \N$ $\varphi$ has an $(S,n)$-model $\bar M$.
				\item If $\varphi = \forall \bar x \exists \bar y \theta$ is a $(p,q)$-formula with an $(S,n)$-model $\bar M$, then $\varphi$ has a $(p,q)$-bounded $n$-submodel $\bar N$. (Proof: Define $N_\ell$ by induction on $\ell$. Pick an arbitardy element $a\in M_0$ and put $N_0=\{a\}$. Given $\ell<n$, there are $\bv N_\ell \bv ^p$ possible $p$-tuples $\bar x$ from $N_\ell$. For each of them choose a $q$-tuple $\bar y$ from $M_{\ell+1}$ witnessing $\theta(\bar x, \bar y)$ and put its elements  in $N_{\ell+1}$.) 
			\end{enumerate}
			An $(S,n)$-structure $\bar N = (N_0,\ldots, N_n)$ is called initial if $N_n$ is a finite initial segment of $\N$ (we do not require that $N_\ell$ is initial for $\ell<n$). 
			We observe that, for fixed $S,n,p,q$, there are only finitely many $(p,q)$-bounded initial $(S,n)$-structures and that any $(p,q)$-bounded $(S,n)$-structures is isomorphic to an initial one. 
			
			Let $(\varphi_n)_{n\in \N}$ be a recursive enumeration of the axioms of $T$ and let $L_n$ be the language of $\varphi_0\land \ldots \land \varphi_n$ (a finite fragment of $L$). Let $a_n,b_n \in \N$ be such that $\varphi_n$ is a closed $(a_n,b_n)$-formula. Let $P:= (p_n)_{n\in \N}$ and $Q := (q_n)_{n\in \N}$ where $p_n := \max_{k\leq n} a_k$ and $q_n := \sum_{k\leq n} b_k$. Since $\varphi_0\land \ldots \land \varphi_n$ is equivalent to a $(p_n,q_n)$-formula in the language $L_n$, there is an initial $(p_n,q_n)$-bounded $(L_n,n)$-model $\bar N$ of $\varphi_0,\ldots, \varphi_n$. We call such a structure a $T_{\bv n}$-model. 
			
			We say that a $T_{\bv n+1}$-model $\bar M = (M_0,\ldots, M_{n+1})$ extends a $T_{\bv n}$-model $\bar N=(N_0,\ldots, N_n)$, if for each $\ell \leq n$, $N_\ell$ is the $L_n$-reduct of a substructure of $M_\ell$ (which is a $L_{n+1}$-structure). 
			
			We define a finitely branching forest $M_T(P,Q)$ as follows. The roots of $M_T(P,Q)$ are the $T_{\bv 0}$-models. For $n>0$, the nodes of $M_T(P,Q)$ at level $n$ are the $T_{\bv n}$-models which extend some node of $M_T(P,Q)$ at leven $n-1$. The extension relation turns $M_T(P,Q)$ into a finitely branching forest (we may make it into a finitely branching tree by adding a fictitious new root). 
			
			By induction on $n$ one can show that every $T_{\bv n}$-model is isomorphic to a node of $M_T(P,Q)$ at level $n$. Assuming that $T$ has a model, it follows that $M_T(P,Q)$ is infinite. Since moreover $M_T(P,Q)$ is recursive and finitely branching, $M_T(P,Q)$ has an infinite path of complexity $\Delta^0_2$ (just take the left-most path with respect to some natural ordering). Let $M$ be the union of the structures $M_m$ such that there is an $m$-model of the form $\bar M = (M_0,M_1,\ldots, M_m)$ in the path (the domain of $M$ is the union of the domains, and the interpretation of each relation symbol $R\in L$ is the union of its interpretations in those $M_m$ in which it is defined).  
			Then $M$ is a model of $T$ whose atomic diagram has complexity $\Delta^0_2$. 
		\end{proof}

		\section{Codes of models}
		
		In this section we define the notion of $\Sigma^0_2$-model and show that the set of codes of $\Sigma^0_2$-models is $\Pi^0_3$-definable (Proposition \ref{prop:code-model}). This is related to the observation in \cite{Kotlarski2004} that the set of codes of consistent complete extensions of a recursively axiomatized theory is $\Pi^0_3$-definable. The difference is that our formulation does not involve the syntactic notion of consistency, which would require fixing a proof-system. 
		
		We need the fact that in $\PA$ there are $\Sigma^0_n$-truth predicates for $\Sigma^0_n$-formulas (see \cite{Hajek1993}). In particular we have: 
		
		\begin{fact}\label{fact:sat}There is a formula $\sat_2(x_0,x_1)\in \Sigma^0_2$ such that 
			for every 
			$\psi (x_1) \in \Sigma^0_2$, $$\PA \vdash \forall x_1 \; \sat_2 (\gnn{\psi},x_1) \liff \psi(x_1);$$
		\end{fact}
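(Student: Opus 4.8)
The plan is to build $\sat_2$ by stratifying the arithmetical hierarchy, following the standard construction of partial truth predicates (see \cite{Hajek1993}). The starting point is a satisfaction predicate for bounded formulas, which one then lifts by prefixing unbounded quantifiers, keeping track of the complexity at each stage.

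First I would construct a predicate $\sat_0(x,s)$ formalizing ``$x$ is the code of a $\Delta^0_0$-formula true under the environment $s$''. This is obtained by arithmetizing Tarski's truth conditions from Definition \ref{defn:environment} as a course-of-values recursion on the (code of the) formula, using the sequence coding provided by $\el$, the term-evaluation function $\val$ for the atomic case, and the numerical bound on quantified variables in the bounded-quantifier case to keep all witnesses below an explicit bound. The key point, which is where the formalization needs care, is that $\sat_0$ can be chosen to be $\Delta^0_1$: it is defined by bounded recursion, so that both it and its negation admit $\Sigma^0_1$-definitions, and $\PA$ proves the recursive clauses, i.e.\ the arithmetized Tarski conditions for $\sat_0$.

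Next I would climb one level at a time, contracting blocks of like quantifiers to a single quantifier by pairing. Writing an arbitrary $\Sigma^0_1$-formula in the normal form $\exists x_i\,\delta$ with $\delta\in\Delta^0_0$, I set $\sat_1(\gn{\exists x_i\,\delta},s)$ to hold iff $\exists a\ \sat_0(\gn{\delta}, s[a/i])$, using the update operation of Proposition \ref{prop:subs}; this predicate is $\Sigma^0_1$. A $\Sigma^0_2$-formula can be put, provably in $\PA$, in the form $\exists x_i\,\chi$ with $\chi\in\Pi^0_1$, and $\chi$ is $\lnot\chi'$ for some $\chi'\in\Sigma^0_1$; I then define $\sat_2(\gn{\exists x_i\,\chi},s)$ to hold iff $\exists a\ \lnot\,\sat_1(\gn{\chi'}, s[a/i])$. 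Since $\sat_1$ is $\Sigma^0_1$, the body $\lnot\,\sat_1$ is $\Pi^0_1$, and prefixing one existential quantifier yields a $\Sigma^0_2$-formula, as required. For the single free variable $x_1$ in the statement the environment $s$ is specialized to the one assigning the given value to $x_1$ and $0$ to every other variable, so $\sat_2(x_0,x_1)$ is recovered from the environment version.

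Finally, the Tarski biconditional $\PA\vdash\forall x_1\ \sat_2(\gn\psi,x_1)\liff\psi(x_1)$ is established not uniformly but as an external schema, by metainduction on the build-up of the fixed standard formula $\psi$: the atomic base and the propositional and quantifier clauses each follow from the $\PA$-provable recursion equations for $\sat_0,\sat_1,\sat_2$, together with the fact that the passage to normal form preserves truth provably in $\PA$. The main obstacle is the base level: one must verify that $\sat_0$ is genuinely $\Delta^0_1$ and that $\PA$ proves its defining Tarski conditions, which requires the careful bounded-recursion coding and the correct bookkeeping of the environment-update operation $s[a/i]$. Once this is in place, the two inductive steps and the verification of the schema are routine.
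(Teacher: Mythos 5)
Your proposal is correct and follows exactly the standard stratified construction of partial truth predicates (a $\Delta^0_1$ satisfaction predicate for bounded formulas obtained by arithmetizing Tarski's conditions, then lifted level by level through quantifier normal forms, with the Tarski biconditional established as an external schema by metainduction on $\psi$). The paper does not prove this statement itself but cites it as a known fact from H\'ajek--Pudl\'ak, and your argument is precisely the construction given there, so there is nothing to add beyond noting that your two flagged delicate points --- the $\Delta^0_1$-ness of the base predicate and the provable correctness of the normal-form passage --- are indeed the places where the details must be checked.
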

		For our purposes we need a variation of $\sat_2$ which works for formulas in two variables and additional parameters as in the following corollary. 
		
		\begin{cor}\label{cor:sat}
			There is a formula $\sat(x_0,x_1,x_2) \in \Sigma^0_2$ such that for every $n\in \N$ and every formula $\psi(z_1, \ldots, z_n,x,y) \in \Sigma^0_2$, 
			$$\PA\vdash \forall a_1, \ldots, a_n, \;\exists c\; \forall x, y \;  \sat(c,x,y) \liff \psi(a_1, \ldots, a_n,x,y).$$
		\end{cor}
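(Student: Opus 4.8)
The plan is to reduce the two-variable-with-parameters case to the one-variable truth predicate $\sat_2$ of Fact \ref{fact:sat} by two moves: first, flattening $\psi$ into a formula with a single free variable, and second, passing all of the parameter values, together with the values of $x$ and $y$, through that single variable rather than substituting them into the formula. This second point is the crux. If instead one substituted numerals for $a_1,\dots,a_n$ directly into $\gnn{\psi}$, the resulting code would depend on the possibly non-standard elements $a_i$, and Fact \ref{fact:sat}, which only speaks about a fixed standard formula, would no longer apply. Keeping the formula handed to $\sat_2$ standard is the only real obstacle; everything else is bookkeeping with primitive recursive coding functions, which by the conventions of Section \ref{sect:natural} behave in every model of $\PA$ exactly as their defining equations prescribe.

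Concretely, for each fixed standard $n$ and each fixed standard $\psi(z_1,\dots,z_n,x,y)\in\Sigma^0_2$ I would introduce the one-variable formula obtained by reading the coordinates of its argument off the sequence coding of Definition \ref{defn:el}:
\[
\psi^\flat(w)\;:=\;\psi\big(\el(w,0),\dots,\el(w,n-1),\el(w,n),\el(w,n+1)\big).
\]
Since $\el$ is primitive recursive and hence $\Sigma^0_1$-definable, $\psi^\flat$ is again a fixed standard $\Sigma^0_2$-formula, so Fact \ref{fact:sat} gives $\PA\vdash\forall w\;\sat_2(\gnn{\psi^\flat},w)\liff\psi^\flat(w)$. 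Next I would fix, once and for all, a primitive recursive tupling operation that, from a sequence code $\hat a$ and values $x,y$, builds the code $w$ whose first $n$ coordinates are those of $\hat a$ followed by $x$ and $y$; using Proposition \ref{prop:subs} this is simply $w=\hat a[x/n][y/(n+1)]$. I would then let the first argument of $\sat$ encode the triple consisting of $\gnn{\psi^\flat}$, the parameter sequence $\hat a$, and the arity $n$, and define $\sat(c,x,y)$ to be $\sat_2$ applied to the formula-component of $c$ and to the tuple built from the parameter-component of $c$ together with $x$ and $y$. Because $\sat_2\in\Sigma^0_2$ and one only substitutes $\Sigma^0_1$-definable terms for its arguments, the resulting $\sat$ is again $\Sigma^0_2$, as required.

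To verify the displayed equivalence inside $\PA$, I would argue as follows. Fix $a_1,\dots,a_n$, take $\hat a$ to be the sequence code with $\el(\hat a,k)=a_{k+1}$ for $k<n$ and $0$ beyond (it exists and is unique by Remark \ref{rem:el} and Proposition \ref{prop:subs}), and let $c$ encode the triple above. For this $c$ and any $x,y$, unwinding the definition gives $\sat(c,x,y)\liff\sat_2(\gnn{\psi^\flat},w)$, where $w$ is the tuple built from $\hat a,x,y$; Fact \ref{fact:sat} turns this into $\psi^\flat(w)$, and the defining properties of $\el$ and of $s[a/k]$ yield $\el(w,k)=a_{k+1}$ for $k<n$, $\el(w,n)=x$ and $\el(w,n+1)=y$, whence $\psi^\flat(w)\liff\psi(a_1,\dots,a_n,x,y)$. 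Chaining these equivalences proves the statement, the witness $c$ being given explicitly as a term in $a_1,\dots,a_n$.
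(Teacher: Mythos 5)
Your proposal is correct and follows essentially the same route as the paper: flatten $\psi$ into a one-variable $\Sigma^0_2$-formula that reads its arguments off a coded sequence, apply $\sat_2$ from Fact~\ref{fact:sat} to that fixed standard formula, and pack its G\"odel number together with the parameter sequence into the code $c$. The only difference is bookkeeping: the paper places $x,y$ at coordinates $0,1$ and shifts the parameters up by two, so the unpacking function need not know the arity, whereas you place the parameters first and therefore must also store $n$ inside $c$; both conventions work.
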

		The idea is that $c$ codes the predicate $\{(x,y) \mid \psi(a_1,\ldots, a_n, x,y)\}$. 
		
		\begin{pf} We make use of the predicate $\sat_2$ of Fact \ref{fact:sat} and of the coding of sequences in Definition \ref{defn:el}. For simplicity we write $(s)_i$ for $\el(s,i)$. Let $\sat(c,x,y)$ be the formula $\sat_0((c)_0, f(c,x,y))$ where $f(c,x,y)$ is the least $t$ such that:
			\begin{itemize}
				\item $(t)_0 = x$
				\item $(t)_1 = y$
				\item $\forall i>0 \; (t)_{i+1} = (c)_i$
			\end{itemize}
			Now, given $\psi$, there is a $\Sigma^0_2$-formula $\theta_\psi(t)$ such  that, in $\PA$,  $$\theta_\psi(t) \liff \psi((t)_2, \ldots, (t)_{n+1}, (t)_0,(t)_1)$$
			Reasoning in $\PA$, given $a_1, \ldots, a_n$, let $c$ be minimal such that $(c)_0= \gn{\theta_\psi}$, $(c)_1 =  a_1, \ldots, (c)_n = a_n$. 
			Then
			\begin{align*} \sat(c, x,y) &\liff \sat_2(\gnn{\theta_\psi}, f(c,x,y))\\
				& \liff \theta_\psi(f(c,x,y)) \\
				& \liff \psi(a_1, \ldots, a_n, x,y) \end{align*}
		\end{pf}
		
		\begin{defn} \label{defn:code}
			Let $M$ be a $\Sigma^0_2$-model of $\PA$  (Definition \ref{defn:complexity}). Then by definition there is a $\Sigma^0_2$-formula $\psi_M(x_0,x_1)$ such that for all formulas $\phi$ of $\PA$ and all $s\in \N$, 
			$$M\models \phi \en{s} \iff \N\models  \psi_M(\gn{\phi},s)$$ 
			Letting $m = \gn{\psi_M}$, this is equivalent to  
			$$M\models \phi \en{s} \iff \sN\models \sat (m ,\gn{\phi},s)$$ 
			where $\sN$ is the standard model of $\PA$. If the above equivalence holds for all $(\phi,s)$ we say that $m$ is a code for the model $M$. 
		\end{defn}
		
		Our next goal is to show that the set of codes of $\Sigma^0_2$-models is $\Pi^0_3$-definable. We want to do so avoiding any recourse to a proof-system. 
		
		\begin{defn} We write $\iota y$ for ``the unique $y$ such that''. 
			When we write an expression like $f(x) = \iota y. P(x,y)$ we mean that $f$ is the partial function defined as follows: if there is one and only one $y$ such that $P(x,y)$, then $f(x)$ is such a $y$; in the opposite case $f(x)$ is undefined.  
		\end{defn}
		
		\begin{defn}[PA] \label{defn:total}
			Given $m$, we define partial functions $0_m, s_m,+_m,\cdot_m$ (of arity $0,1,2,2$ respectively) as follows. Fix an arbitrary $s$ (for instance $s=0$). 
			\begin{itemize}
				\item $0_m = \iota y  . \; \sat(m,\; \gnn{0=x_0},\;s[y/0] )$
				\item $S_m(a) = \iota  y .\;  \sat (m,\; \gnn{S(x_0)= x_1}, \;s[a/0,y/1] )$
				\item $a+_m b = \iota y . \; \sat (m,\; \gnn{x_0+x_1 = x_2}, \;s[a/0,b/1,y/2] )$
				\item $a \cdot_m b = \iota y . \; \sat (m,\; \gnn{x_0\cdot x_1 = x_2}, \;s[a/0,b/1,y/2] )$
			\end{itemize}
			We say that $m$ is total if these functions are total, i.e.\ the various $y$ always exist and are unique. Since $\sat$ is $\Sigma^0_2$, ``$m$ is total'' is a $\Pi^0_3$-definable predicate in $m$. If $m$ is total we define a function $\fval$ whose first argument satisfies the predicate $\tm(x)$ as follows:
			\begin{itemize}
				\item $\fval(\fvar(i),m,s) = \el(s,i)$ 
				\item $\fval(\gnn{0},m,s) = 0_m$ 
				\item $\fval(\fS(a),m,s) = S_m(\fval(a,m,s))$ 
				\item $\fval(\f+(a,b),m,s) = \fval(a,m,s) +_m \fval(b,m,s)$ 
				\item $\fval(\fx(a, b),m,s) = \fval(a,m,s) \cdot_m\fval(b,m,s)$ 
			\end{itemize}
			Note that $\fval$ is $\Pi^0_3$-definable. 
		\end{defn}

		\begin{defn}[PA] \label{defn:tarski}
			We write $\fmod(m)$ if $m$ is total (Definition \ref{defn:total}) and the conjunction of the universal closure of the following clauses holds,  where the variables $\phi, \psi$ are relativized to the predicate $\fm$, the variables $a,b$ are relativized to the predicate $\tm$, and the variables $i,s$ are unrestricted.
			\begin{itemize}
\item $0_m = 0$ (see Definition \ref{defn:complexity})
\item $\sat(m, \fexists (i, \phi), s) \; \liff \;  \exists x \;
\sat(m, \phi, s[x/i])$
\item $\sat(m, \fand(\phi,\psi), s)\; \liff \; \sat(m, \phi, s) \land
\sat(m, \psi, s)$
\item $\sat(m, \fnot (\phi), s) \;\liff\; \lnot \sat(m, \phi, s)$
\item $\sat(m, \feq (a,b), s)\; \liff\; \fval(a,m,s) = \fval(b,m,s)$
\item $\text{Ax}_{\PA}(\phi)\; \to\; \sat(m, \phi, s)$
			\end{itemize}	 
Where $\text{Ax}_{\PA}(x)$ is the natural formalization of ``$x$ is an axiom of $\PA$''. 
		\end{defn}
		
		\begin{prop}\mbox{} \label{prop:code-model}
			\begin{enumerate}
				\item 	$\fmod(m)$ is a $\Pi^0_3$-formula
in the free variable $m$.
				\item If $M$ is a $\Sigma^0_2$-model of $\PA$ and $m$ is a code for $M$ (Definition \ref{defn:code}), then $\N \models \fmod (m)$. 
				\item If $m\in \N$ and $\N \models \fmod (m)$, then there is a $\Sigma^0_2$-model $M$ such that $$M\models \phi \en{s} \iff \N\models \sat(m,\gn{\phi},s)$$ for all $\phi,s$. 
			\end{enumerate}
		\end{prop}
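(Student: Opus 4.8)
The three parts call for rather different work: part (1) is a complexity computation, while parts (2) and (3) are essentially unwindings of Tarski's truth conditions, proved by induction on the structure of \emph{standard} formulas in the metatheory. I would organize the proof in that spirit, and I expect the only genuinely delicate point to be the bookkeeping in part (1).

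For part (1), recall that $\sat$ is $\Sigma^0_2$ (Corollary \ref{cor:sat}) and that ``$m$ is total'' is $\Pi^0_3$ (Definition \ref{defn:total}). The key preliminary observation is that, \emph{granting} totality, each basic operation $0_m,S_m,+_m,\cdot_m$ has a $\Delta^0_2$ graph: for instance $S_m(a)=y$ is by definition the $\Sigma^0_2$ condition $\sat(m,\gnn{S(x_0)=x_1},s[a/0,y/1])$, while $S_m(a)\ne y$ is equivalent, by uniqueness, to $\exists y'(y'\ne y \land S_m(a)=y')$, again $\Sigma^0_2$. Feeding these $\Delta^0_2$ graphs through the recursion defining $\fval$ (so that $\fval(a,m,s)=y$ asserts the existence of a computation whose steps are verified by the $\Delta^0_2$ operations) shows that $\fval$ likewise has a $\Delta^0_2$ graph under totality. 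Consequently every Tarski clause of Definition \ref{defn:tarski} is a Boolean combination of the $\Sigma^0_2$ atoms $\sat(m,\cdot,\cdot)$ and the $\Delta^0_2$ atoms $\fval(\cdot,m,\cdot)=\fval(\cdot,m,\cdot)$ --- note that the existential clause contributes $\exists x\,\sat(m,\phi,s[x/i])$, still $\Sigma^0_2$ --- hence each clause is $\Delta^0_3$, its universal closure is $\Pi^0_3$, and the conjunction of totality with these finitely many $\Pi^0_3$ clauses is again $\Pi^0_3$. The one point to handle with care is that this $\Delta^0_2$ description of the $\fval$-graph is only correct under totality, so strictly I would form $\fmod(m)$ as the conjunction of the $\Pi^0_3$ totality clause with clauses that use this description; their conjunction has the intended meaning and the intended complexity.

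For part (2), let $M$ be a $\Sigma^0_2$-model with code $m$, so $M\models\phi\en{s}\iff\N\models\sat(m,\gn{\phi},s)$ for all $\phi,s$ (Definition \ref{defn:code}). Totality of $m$ is immediate: unwinding the $\iota$-definitions against the code property identifies $0_m,S_m,+_m,\cdot_m$ with the genuine total single-valued operations $0_M,S_M,+_M,\cdot_M$ of the structure, and $0_m=0_M=0$ by the convention of Definition \ref{defn:complexity}. A straightforward induction on terms then gives $\fval(\gn{t},m,s)=\val(t,M,s)$. Each remaining clause of $\fmod(m)$ becomes an instance of Tarski's truth conditions for $M$ (Definition \ref{defn:environment}): for example $\sat(m,\fexists(i,\gn{\phi}),s)$ unwinds, via $\fexists(i,\gn{\phi})=\gn{\exists x_i\phi}$, to $M\models(\exists x_i\phi)\en{s}$, which holds iff there is $x\in\N$ with $M\models\phi\en{s[x/i]}$, i.e.\ iff $\exists x\,\sat(m,\gn{\phi},s[x/i])$; the $\fand$, $\fnot$ and $\feq$ clauses are analogous, and the axiom clause holds because $M\models\PA$. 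Hence $\N\models\fmod(m)$.

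For part (3), assume $\N\models\fmod(m)$ and define $M=(\N;0_m,S_m,+_m,\cdot_m)$; totality guarantees these are honest total functions on $\N$, and the clause $0_m=0$ secures the convention of Definition \ref{defn:complexity}. The heart of the matter is the equivalence $M\models\phi\en{s}\iff\N\models\sat(m,\gn{\phi},s)$, which I would prove by induction on the standard formula $\phi$: the atomic case reduces, via the $\feq$ clause and the term-value identity $\val(t,M,s)=\fval(\gn{t},m,s)$ (itself an induction on $t$ using $0_m,S_m,+_m,\cdot_m$), to the definition of satisfaction of equalities in $M$; the connective and quantifier cases invoke the corresponding Tarski clauses of $\fmod(m)$ together with the identities $\fnot(\gn{\phi})=\gn{\lnot\phi}$, $\fand(\gn{\phi},\gn{\psi})=\gn{\phi\land\psi}$, $\fexists(i,\gn{\phi})=\gn{\exists x_i\phi}$ and the induction hypothesis. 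Since $\sat$ is $\Sigma^0_2$ with the fixed standard parameter $m$, this equivalence exhibits the satisfaction set of $M$ as $\Sigma^0_2$, so $M$ is a $\Sigma^0_2$-model; and applying the axiom clause $\text{Ax}_{\PA}(\phi)\to\sat(m,\phi,s)$ together with the equivalence shows $M\models\phi\en{s}$ for every axiom $\phi$ of $\PA$ and every $s$, so $M\models\PA$. All inductions here run over genuine formulas and terms in the metatheory, so no formalization inside $\PA$ is needed; the main obstacle remains the complexity accounting of part (1).
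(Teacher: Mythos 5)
Your proposal is correct and follows essentially the same route as the paper: part (1) by inspection of the complexity of the clauses of Definition \ref{defn:tarski}, part (2) by unwinding the Tarski clauses against the code property for standard formulas, and part (3) by defining $M=(\N;0_m,S_m,+_m,\cdot_m)$ and inducting on the complexity of $\phi$. Your part (1) is in fact more careful than the paper's one-line ``by inspection'': the observation that, under totality, the graph of $\fval$ admits a $\Delta^0_2$ description (so that the $\feq$-clause stays within the Boolean closure of $\Sigma^0_2$ and its universal closure is $\Pi^0_3$) is precisely the detail needed to make the count come out right.
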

		If 3. holds, $M$ is the (unique) model coded by $m$. So every $\Sigma^0_2$-model has a code, but different codes may code the same model. 
		\begin{pf} Point 1. is by inspection of the definition of
$\fmod(x)$. Indeed we have already observed that the totality condition in
Definition \ref{defn:total} is $\Pi^0_3$. It is also clear that the
negative occurrence of the subformula $\exists a \; \sat(m, \phi, s[a/i])$ in Definition \ref{defn:tarski} is $\Pi^0_3$ and the other parts in the definition of $\fmod(x)$ have lower complexity. 
			
			To prove 2.  we recall that, 	
			by its very definition, $\fmod(m)$ expresses the
fact that the set $\{(\phi,s) \mid \sat(m, \phi, s)\}$ satisfies Tarski's truth conditions for arithmetized formulas (standard or non-standard). When interpreted in the standard model $\N$, we only need to consider standard arithmetized formulas and (2) follows from the assumption that $M$ is a model. 
			
			To prove 3., let $m\in \N$ be such that $\N \models \fmod(m)$. Define $M$ as the structure with domain $\N$ which interprets $0,S,+,\cdot$ as $0_m, S_m, +_m, \cdot_m$ respectively. By induction on the complexity of the formula $\phi$ we have $M\models \phi\en{s} \iff \N\models \sat(m,\gnn{\phi},s)$. 
		\end{pf}
		
		
		\section{An anti-quote notation}
		
		\begin{defn}
			If $\phi$ is a formula without free variables, we write $\true(x,\gnn{\phi})$ for $\sat(x,\gnn{\phi},0)$ and observe that $$\PA\vdash \fmod(m) \to \forall s (\true(m,\gnn{\phi}) \liff \sat(m,\gnn{\phi},s)),$$ i.e.\ $\PA$ proves that the truth of a closed formula in a model does not depend on the environment.  
		\end{defn}

		\begin{defn}
			If $\psi(x_0, \ldots, x_n)$ is a formula of $\PA$,
we write \[\true(m, \gnn{\psi(\dt{a}_0, \ldots, \dt{a}_n)})\] for
$\exists
s \;
\el(s,\ov 0) = a_0 \wedge \dotsb \wedge
\el(s,\ov n) = a_n
\land  \sat(m, \gnn{\psi(x_0,
\ldots, x_n)}, s)$. 
		\end{defn}
		
		\noindent If $\fmod(m)$ holds, 
		$\sat(m, \gnn{\psi(\dt{a}_0,\ldots, \dt{a}_n)})$ formalizes the fact that $\psi$ holds in the model coded by $m$ in the environment which assigns the value $a_i$ to the variable $x_{i}$. 
		
		Intuitively $\gnn{~}$ is a quote notation and the dot is an anti-quote. If an expression appears within the scope of $\gnn{~}$ it is only its name that matters, not its value, but if we put a dot on it, it is its value that matters and not its name. The following remark will further clarify the issue. 
		
		\begin{rem} Assume $\fmod(m)$. 
			If $f$ is a primitive recursive function, there is
a difference between $\true(m, \gnn{\psi(\dt{f}(x))})$ and $\true(m,
\gnn{\psi(f(\dt{x}))})$. In the first case we evaluate $f(x)$ outside of
$m$ and we intepret $\psi(x_0)$ in $m$ in the environment $x_0\mapsto
f(x)$. In the second case we interpret the formula $\psi(f(x_0))$ in $m$
in the environment $x_0 \mapsto x$. More precisely, $\PA$ proves that if
$\fmod(m)$ holds, then:
			\begin{itemize}
				\item $\true(m, \gnn{\psi(\dt{f}(x))})\liff \exists s (\el(s,0) = f(x) \land \sat(m, \gnn{\psi(x_0)},s))$ 
				\item $\true(m, \gnn{\psi(f(\dt{x}))}) \liff \exists t (\el(t,0) = x \land \sat(m, \gnn{\psi(f(x_0))},t))$ 
			\end{itemize} 
		\end{rem}
\noindent For example, $\true(m, \gnn{s(\dt x)=\dt{s}(x)})$ might non hold
when~$x=0$.
		
		\section{Coding environments}
		Given a finitely supported sequence $a_0, a_1, \ldots a_n, \ldots \in \N$, there is some $s\in \N$ which codes the given sequence in the sense that $\el(s,k) = a_k$ for all $k\in \N$. Now let $M$ be a model of $\PA$ with domain $\N$.

		The aim of this section is to construct a function $\Env$ which, given $M$
		and~$s$, produces an element $\Env(s,M)\in M$ such that for all $k\in \N$
		$$\el^M(\Env(s,M),\ov k^M) = \el(s,k)=a_k$$

		
		In fact we will produce a $\Pi^0_3$-definable function $\env$ such that given $s$ and a code $m$ for a $\Sigma^0_2$-model $M$, yields $\env (s,m)=\Env(s,M)$. 
		
		To construct $\Env(s,M)$ we encounter a technical difficulty as we need $\el^M(\Env(s,M),\ov k^M) = 0$ for all large enough $k\in \N$. When $M$ is isomorphic to $\N$ this implies $0^M= 0$, which is the technical condition required in Definition~\ref{defn:complexity}. A different approach would have been to code environments by finite sequences instead of finitely supported sequences. With this encoding the assumption $0^M=0$ becomes unnecessary at the expense of complicating the definition of Tarski's semantics. 
		
	
		\begin{lem}\label{lem:Env} Let $M$ be a model of $\PA$ with domain $\N$. 
			Given $s\in \N$, there is a unique $t$, denoted $\Env(s,M)$,  such that:
			\begin{enumerate} 
				\item  $\forall k<s \; \forall a$, $\N \models \el(s,k) = a \implies M \models \el(t, \ov{k}) = a$
				\item $M \models \forall k \geq \ov{s}\; \el(t, k) = 0$ 
			\end{enumerate}
			Note that for $k\geq s$, we have $\N \models \el(s,k) = 0$. It follows that for all $s,k\in \N$ we have $M\models \el(\Env(s,M),\ov k) = x_0$ in the environment $x_0 \mapsto \el(s,k)$, or in other words $$\el^M(\Env(s,M),\ov k^M) = \el(s,k)$$ where the superscript indicates the model where $\el$ and $\ov k$ are evaluated. 
		\end{lem}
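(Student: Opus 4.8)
The plan is to exhibit $\Env(s,M)$ explicitly as the interpretation $\ov s^M$ of the numeral $\ov s$ in $M$, to verify that this single element satisfies conditions (1) and (2), and then to read off uniqueness from the injectivity of the coding recorded in Remark \ref{rem:el}. So I would set $t := \ov s^M$ and treat (1) first. Condition (1) concerns only standard indices $k<s$. Since $\el$ is a primitive recursive function naturally represented in $\PA$ (Definition \ref{defn:el}), putting $a := \el(s,k)$ the representability property (property 1 of Section \ref{sect:natural}) gives $\PA \vdash \el(\ov s, \ov k) = \ov a$; as $M \models \PA$, this yields $M \models \el(t, \ov k) = a$, which is exactly (1).

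For condition (2) I would first establish, uniformly in $\PA$, the statement $\forall s\, \forall k\,(k \geq s \to \el(s,k) = 0)$. Reasoning inside $\PA$: if $k \geq s$ then the $(k{+}1)$-th prime $p_k$ satisfies $p_k \geq k+2 > s+1$ (using that the prime enumeration is increasing and that $p_k \geq k+2$, both provable in $\PA$), and a prime exceeding $s+1$ cannot divide $s+1$; hence, by the definition of $\el$ as the exponent of $p_k$ in $s+1$, its value is $0$. Instantiating at $\ov s$ gives $\PA \vdash \forall k\,(k \geq \ov s \to \el(\ov s, k) = 0)$, so $M \models \forall k \geq \ov s\; \el(t,k) = 0$, which is (2) (here the value $0$ in $M$ is $0_M = 0$ by Definition \ref{defn:complexity}).

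For uniqueness, suppose $t$ and $t'$ both satisfy (1) and (2). By the injectivity of the coding in Remark \ref{rem:el}, it suffices to prove $M \models \forall k\; \el(t,k) = \el(t',k)$. For $k \geq \ov s^M$ both sides equal $0$ by (2). For $k < \ov s^M$ the element $k$ is standard, since in any model of $\PA$ the elements below a standard number $\ov s^M$ are exactly $\ov 0^M, \dots, \overline{s-1}^M$; writing $k = \ov j^M$ with $j<s$, condition (1) forces $\el(t, \ov j) = \el(s,j) = \el(t', \ov j)$. Hence $M \models t = t'$, i.e. $t = t'$.

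I expect the main obstacle to be the existence part, and specifically condition (2): one must verify not merely the standard computation $\el(s,k) = 0$ for $k \geq s$ but its $\PA$-provable, hence internally universal, version, so that it transfers to the \emph{nonstandard} indices $k$ of $M$. This is precisely where the ``natural'' representation of the prime enumeration and of $\el$ is needed, and it is the technical point signalled by the discussion of the hypothesis $0_M = 0$ preceding the lemma.
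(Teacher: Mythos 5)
There is a genuine gap in the existence part: the proposed witness $t=\ov s^M$ does not satisfy condition (1) as the lemma intends it. Representability of $\el$ gives $\PA\vdash \el(\ov s,\ov k)=\ov a$ for $a=\el(s,k)$, hence $M\models \el(t,\ov k)=\ov a$, i.e.\ the value is the \emph{numeral value} $\ov a^M$. But condition (1) asserts $M\models \el(t,\ov k)=a$ where $a$ is the actual element $a\in\N=\dom(M)$ (note that $\ov k$ carries a bar while $a$ does not, and the closing sentence of the lemma makes this explicit by assigning $\el(s,k)$ to a variable via an environment). Since only $0_M=0$ is assumed and $S_M$ may be arbitrary, $\ov a^M\neq a$ in general, so your step ``$M\models \el(\ov s,\ov k)=\ov a$ yields $M\models \el(t,\ov k)=a$'' silently identifies two different elements of the domain. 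This identification is exactly the confusion that the paper's anti-quote discussion warns about ($\true(m,\gnn{S(\dt x)=\dt S(x)})$ can fail). The distinction is not pedantry: the whole purpose of $\Env$ (see its use in Proposition \ref{prop:inmodel} and Proposition \ref{prop:substitutions-easy}) is to internalize an assignment $x_i\mapsto\el(s,i)$ of \emph{arbitrary domain elements}, which cannot be reached by numerals; with your $t$, Proposition \ref{prop:substitutions-easy} would be false.

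The paper instead proves, by induction on $n$, that for each $n$ there is a unique $t$ coding $\el(s,0),\dots,\el(s,n-1)$ at the standard positions $\ov 0,\dots,\ov{n-1}$ and $0$ beyond $\ov n$: the base case is $t=0$ (the code of the all-zero sequence, which is where $0_M=0$ enters), and the inductive step applies the substitution function of Proposition \ref{prop:subs} \emph{inside} $M$ to insert the domain element $\el(s,n)$ at position $\ov n$; taking $n=s$ gives the lemma. Your verification of condition (2) and your uniqueness argument via Remark \ref{rem:el} (splitting indices of $M$ into standard ones below $\ov s^M$ and the rest) are sound and would carry over to the correct witness, but your closing diagnosis is off: the delicate point is not (2) but precisely the insertion of actual, numeral-unreachable domain elements required by (1).
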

		\begin{pf} We will prove the following more general result: for all $n$ there is a unique $t$ such that:
			\begin{enumerate}
				\item  $\forall k<n \; \forall a$, $\N \models \el(s,k) = a \implies M \models \el(t, \ov{k}) = a$
				\item $M \models \forall k \geq \ov{n}\; \el(t, k) = 0$ 
			\end{enumerate}
			Granted this, the lemma follows by taking $n=s$. To prove our claim we proceed by induction on $n$. 
			For $n= 0$, we take $t=0$. The inductive step follows from Proposition \ref{prop:subs}, which allows to modify a given coded sequence by changing any of its values.  
		\end{pf}
		
		Recalling the substitution function $s[z/k]$ from Proposition \ref{prop:subs}, the crucial property of $\Env$ is that it 
		commutes with substitutions in the sense of the following proposition.
		
		\begin{prop}\label{prop:substitutions-easy} Let $M$ be a model of $\PA$ with domain~$\N$, then for all~\hbox{$z,s,k\in\N$} 
			$$M \models e_1[z/\ov k]= e_2$$
			where $e_1 = \Env(s,M)$ and $e_2 = \Env(s[z/k],M)$. 
		\end{prop}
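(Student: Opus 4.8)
The plan is to prove the equality $M \models e_1[z/\ov k] = e_2$ by showing that the two elements $e_1[z/\ov k]$ and $e_2 = \Env(s[z/k],M)$ encode, \emph{inside} $M$, the same finitely supported sequence, and then to invoke injectivity of the coding. Since Remark \ref{rem:el} is provable in $\PA$ and hence holds in $M$, it suffices to verify
\[
M \models \forall i\; \el(e_1[z/\ov k], i) = \el(e_2, i),
\]
that is, $\el^M(e_1[z/\ov k], i) = \el^M(e_2, i)$ for every $i \in M$, standard or not.

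To unwind the left-hand side I would first note that the substitution operation of Proposition \ref{prop:subs}, being provable in $\PA$, is available inside $M$: thus $e_1[z/\ov k]$ is the unique element of $M$ with $\el^M(e_1[z/\ov k], i) = \el^M(e_1, i)$ for $i \neq \ov k^M$ and $\el^M(e_1[z/\ov k], \ov k^M) = z$ (identifying the standard number $z$ with $\ov z^M$). For both $e_1 = \Env(s,M)$ and $e_2 = \Env(s[z/k],M)$ I would then use the explicit description supplied by Lemma \ref{lem:Env}: at a standard position $\ov j^M$ the value $\el^M(\Env(r,M), \ov j^M)$ is the standard number $\el(r,j)$, while at any position $i \geq \ov r^M$ — in particular at every non-standard $i$, since $\ov r^M$ is standard — the value is $0$. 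Here the convention $0_M = 0$ from Definition \ref{defn:complexity} is exactly what makes the two sides agree at the value $0$.

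With these facts the verification reduces to a short case distinction on the position $i$. If $i$ is non-standard, then $i \neq \ov k^M$, and both sides equal $0$ because non-standard positions lie above $\ov s^M$ and $\ov{s[z/k]}^M$ respectively. If $i = \ov j^M$ is standard with $j \neq k$, then again $i \neq \ov k^M$, so the left side is $\el^M(e_1, \ov j^M) = \el(s,j)$ and the right side is $\el(s[z/k], j)$; these agree because $\el(s[z/k], j) = \el(s,j)$ for $j \neq k$ by the definition of the substitution in $\N$. Finally, if $i = \ov k^M$, the left side equals $z$ by the substitution clause, and the right side equals $\el(s[z/k], k) = z$. In all cases the values coincide, so by injectivity $M \models e_1[z/\ov k] = e_2$.

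The one point that needs care — and which I expect to be the main obstacle — is the treatment of non-standard positions $i \in M$: the substitution inside $M$ ranges over all of $M$, not merely the standard initial segment, so one cannot simply transcribe the standard identity defining $s[z/k]$. The resolution is precisely that $\ov k^M$ is standard whereas every non-standard position exceeds $\ov s^M$, so on the non-standard part both environments are identically $0$ and the substitution at $\ov k^M$ affects nothing there. This is also the place where the hypothesis $0_M = 0$ of Definition \ref{defn:complexity} is genuinely used.
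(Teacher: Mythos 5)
Your proof is correct and follows essentially the same route as the paper's: reduce to extensional equality of the coded sequences via injectivity of the coding, then split into the three cases $i=\ov k^M$, $i=\ov x^M$ with $x\neq k$, and $i$ non-standard, where both sides equal $z$, $\el(s,x)$, and $0$ respectively. One cosmetic point: the parenthetical identification of $z$ with $\ov z^M$ is unnecessary and not valid in general (the domain of $M$ is $\N$ but $\ov z^M$ need not be $z$); both the substituted value and the value supplied by Lemma \ref{lem:Env} at position $\ov k^M$ are the domain element $z$ itself, so the case goes through without that identification.
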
 
		We may write the proposition more perspicuosly as $$M\models \Env(s,M)[z/\ov k] = \Env(s[z/k],M),$$
		but note that $\Env(s,M)$ and $\Env(s[z/k],M)$ are defined outside of $M$, while 
		$e_1[z/\ov k]$ depends on the intepretation of a $\Sigma^0_1$-formula inside $M$ (the formula which defines the primitive recursive substitution function in Proposition \ref{prop:subs}). 
		\begin{pf}
		It suffices to show that for all $i\in M$, $$M\models \el(\Env(s,M)[z/\ov k],i) = \el(\Env(s[z/k],M),i).$$ We distinguish three cases: 
		\begin{itemize}
		\item $i={\ov k}^M$
		\item $i = \ov x^M$ for some $x\in \N$ different from $k$
		\item $i$ is a non-standard element of $M$ 
	\end{itemize}
In the first case both sides of the equality to be proved are equal to $z$. In the second case they are both equal to $\el(s,x)$. In the third case they are both equal to $0$. 
		\end{pf}
		
	In the rest of the section we formalize Lemma \ref{lem:env} and Proposition \ref{prop:substitutions-easy} inside $\PA$. We need some definitions.

	\begin{defn}
		Let $\num: \N\to \N$ be the primitive recursive function $n\mapsto \gn{S^n(0)}$.
	\end{defn}
	We can represent $\num$ inside $\PA$, so it will make sense to apply it to non-standard elements of a model of $\PA$. 
	
	\begin{defn}[PA] \label{defn:numv} Assuming $\fmod(m)$, let $\vnum(n,m) = \fval(\num(n), m, 0)$  (the third argument of $\fval$ codes the environment, which is irrelevant in this case). 
	\end{defn}
	If $n$ is standard, then $\vnum(n,m)$ is the value of the numeral $\ov n$ in the model coded by $m$. 	
	
	We can now define a function $\env$ such that, if $M$ is a $\Sigma^0_2$-model with code $m$, then $\env(s,m) = \Env(s,M)$.
	
	\begin{lem}[PA] \label{lem:env}
		Let $m$ be such that $\fmod(m)$. Given $s$, there is a unique $t$, denoted $\env(s,m)$, such that: 
		\begin{enumerate}
			\item $\forall k <s \;  \true(m, \gnn{ \el(\dt{t}, \dt{\vnum}(k,m)) = \dt{\el}(s,k) } )$
			\item 
			$\true(m, \gnn{\forall k \geq \dt{\vnum}(s,m) \; \el(\dt{t},k) = 0}).$
		\end{enumerate}  
	Similarly to Proposition \ref{lem:Env} for all $s,k$ we have $$\true(m,\gnn{\el(\dt{\env}(s,m),\dt{\vnum}(k,m)) = \dt{\el}(s,k)}).$$
	\end{lem}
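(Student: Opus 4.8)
The plan is to formalize, inside $\PA$, the inductive construction used for Lemma~\ref{lem:Env}. I would first prove by induction on $n$ (an ordinary $\PA$-induction, since the statement is a single formula in $n$ with $m,s$ as parameters) the more general claim obtained by replacing $s$ by an arbitrary $n$: for every $n$ there is a unique $t$ such that
\[\forall k<n\;\true(m,\gnn{\el(\dt{t},\dt{\vnum}(k,m))=\dt{\el}(s,k)})\]
and $\true(m,\gnn{\forall k\geq\dt{\vnum}(n,m)\;\el(\dt{t},k)=0})$. Specializing to $n=s$ then gives the lemma and defines $\env(s,m)$ as the resulting $t$. Throughout, every assertion about $\el$-values ``inside the model coded by $m$'' is expressed through $\sat$/$\true$ and discharged by unwinding the Tarski clauses packaged in $\fmod(m)$ (Definition~\ref{defn:tarski}); in particular the last clause of that definition lets me use, inside the coded model, any fact that $\PA$ proves---most importantly that $\el(0,k)=0$ for all $k$, that the coding is injective (Remark~\ref{rem:el}), and the existence and uniqueness of the one-coordinate modification $t[a/k]$ of Proposition~\ref{prop:subs}.

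For the base case $n=0$ the first clause is vacuous and the second asks for the unique $t$ with $\el$-value $0$ at every internal position; by the convention $0_m=0$ (Definition~\ref{defn:total}) and the internal provability of $\el(0,k)=0$ this is $t=0$, uniqueness coming from injectivity. For the step, given a witness $t$ for $n$ I would take $t'$ to be the internal modification $t[\,\dt{\el}(s,n)\,/\,\dt{\vnum}(n,m)\,]$, i.e.\ reset the coordinate at the internal position $\vnum(n,m)$ (the value in the coded model of the numeral for $n$) to the externally computed value $\el(s,n)$, using the substitution function of Proposition~\ref{prop:subs} read inside the coded model. Clause~1 for $t'$ then splits into $k<n$, where the value is unchanged because $\vnum(k,m)\neq\vnum(n,m)$ (distinct numerals stay distinct in any model of $\PA$), and $k=n$, which holds by the defining property of the modification; clause~2 for $n+1$ holds because the only altered coordinate, $\vnum(n,m)$, lies strictly below $\vnum(n+1,m)$, so all coordinates at internal positions $\ge\vnum(n+1,m)$ are still $0$. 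Uniqueness at stage $n$ follows from injectivity once one knows that the two position sets covered by the clauses---the internal positions $\ge\vnum(n,m)$ and the positions $\vnum(k,m)$ for external $k<n$---exhaust the coded model; this is the assertion that $k\mapsto\vnum(k,m)$ is a bijection from $[0,n)$ onto the internal segment below $\vnum(n,m)$, which I would fold into the same induction on $n$ using that the coded model satisfies $\PA$.

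Finally, for the concluding identity I take $t=\env(s,m)$ and argue by cases: for $k<s$ it is clause~1, while for $k\geq s$ the external value $\el(s,k)$ is $0$ (the sequence coded by $s$ vanishes from $s$ on) and, by monotonicity of $k\mapsto\vnum(k,m)$ inside any model of $\PA$, $\vnum(k,m)\geq\vnum(s,m)$, so clause~2 forces the internal value to be $0$ too.

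I expect the main obstacle to be the bookkeeping imposed by the anti-quote notation: one must keep scrupulous track of which quantities ($\el(s,k)$ and $\vnum(k,m)$) are computed externally and injected as values, and which operations ($\el$ and the one-coordinate substitution) are performed inside the coded model via $\sat$. The genuinely delicate point is the surjectivity half of the segment bijection above---that every internal position below $\vnum(n,m)$ equals $\vnum(k,m)$ for some external $k<n$---which is needed for uniqueness; like injectivity and monotonicity it is an instance of a $\PA$-provable fact about iterated successors, available inside the coded model through the axiom clause of $\fmod(m)$.
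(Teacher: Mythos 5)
Your proposal is correct and follows essentially the same route as the paper: the paper's proof of Lemma~\ref{lem:env} simply says ``formalize the proof of Lemma~\ref{lem:Env} in $\PA$,'' and that proof is exactly your induction on $n$ with base case $t=0$ (using $0_m=0$) and inductive step via the substitution function of Proposition~\ref{prop:subs} read inside the coded model. Your write-up is in fact more explicit than the paper's about the points that make the formalization go through (uniqueness via the internal bijectivity of $k\mapsto\vnum(k,m)$ onto the initial segment below $\vnum(n,m)$, handled by folding it into the same $\PA$-induction), which the paper leaves implicit.
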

\begin{pf}
	By formalizing the proof of Lemma \ref{lem:Env} in $\PA$. 
\end{pf}

		We can now give a formalized version of Proposition \ref{prop:substitutions-easy}. 
	
		\begin{prop}[PA]\label{prop:substitutions} $\forall m, z, k, s$, if $\fmod(m)$, then 
			$$\true(m, \gnn{\dt{\env}(s,m)[\dt{z}/\dt{\vnum}(k,m)]= \dt{\env}(s[z/k],m)})$$ 
		\end{prop}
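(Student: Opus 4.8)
The plan is to unravel the anti-quote notation and reduce the statement to a single equality between elements of the model coded by $m$, and then to invoke the uniqueness clause of Lemma \ref{lem:env}. Write $e_1=\env(s,m)$, $e_2=\env(s[z/k],m)$, $s'=s[z/k]$ and $\kappa=\vnum(k,m)$. By the definition of the dotted notation, $\true(m,\gnn{\dt{\env}(s,m)[\dt z/\dt{\vnum}(k,m)]=\dt{\env}(s[z/k],m)})$ asserts the existence of an environment assigning $e_1,z,\kappa,e_2$ to $x_0,x_1,x_2,x_3$ in which $\sat(m,\gnn{x_0[x_1/x_2]=x_3},\cdot)$ holds. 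Since $\fmod(m)$ guarantees the Tarski clauses of Definition \ref{defn:tarski}, this $\sat$ reduces through $\fval$ to the plain equality $t^*=e_2$, where $t^*$ is the element obtained by performing, inside the model coded by $m$, the primitive recursive substitution of Proposition \ref{prop:subs} on the arguments $e_1,z,\kappa$. Thus the proposition is equivalent to $t^*=e_2$, and everything below is carried out in $\PA$ under the hypothesis $\fmod(m)$, so that the model coded by $m$ satisfies $\PA$. Throughout I write $\el^M$ for $\el$ evaluated in that model.

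To prove $t^*=e_2$ I would verify that $t^*$ satisfies the two defining clauses of $\env(s',m)$ in Lemma \ref{lem:env} and then appeal to uniqueness. Since the model coded by $m$ satisfies $\PA$, Proposition \ref{prop:subs} holds inside it, so $\el^M(t^*,\kappa)=z$ while $\el^M(t^*,j)=\el^M(e_1,j)$ for every $M$-position $j\neq\kappa$. Combining this with the clauses for $e_1$ (value $\el(s,k')$ at $\vnum(k',m)$ for $k'<s$, and value $0$ at every position $\geq_M\vnum(s,m)$) and the external identities $\el(s',i)=\el(s,i)$ for $i\neq k$, $\el(s',k)=z$, one checks clause (1): for $k'<s'$ the value $\el^M(t^*,\vnum(k',m))$ equals $\el(s',k')$, splitting on $k'=k$ (both sides $z$) and $k'\neq k$ (both sides $\el(s,k')$, whether or not $k'<s$); and clause (2): $\el^M(t^*,j)=0$ for every $M$-position $j\geq_M\vnum(s',m)$.

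The step I expect to be the main obstacle is exactly the one that in Proposition \ref{prop:substitutions-easy} was dispatched by the case ``$i$ is non-standard'': inside $\PA$ there is no standard/non-standard dichotomy, so the case analysis must instead be driven by an \emph{initial segment lemma}, which I would establish first. Using $\fmod(m)$, hence that the model satisfies $\PA$, a straightforward $\PA$-induction on $s$ shows that the $M$-positions $<_M\vnum(s,m)$ are precisely $\{\vnum(k',m):k'<s\}$ and that $k'\mapsto\vnum(k',m)$ is strictly $<_M$-increasing; in particular it is injective and satisfies $\vnum(k',m)\geq_M\vnum(s',m)\liff k'\geq s'$. This is what guarantees that the two clauses of Lemma \ref{lem:env} pin down every $M$-position, and it is what makes the appeal to uniqueness legitimate. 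The one genuinely fiddly point is the degenerate configuration $z=0$ with $k\geq s'$ (so that zeroing position $k$ makes $s'<s$): in clause (2) the subcase $j=\kappa$ then forces $z=0$, hence $\el^M(t^*,\kappa)=0$, and for a position $j=\vnum(k'',m)$ with $s'\leq k''<s$ and $k''\neq k$ one still needs $\el^M(e_1,j)=0$, which holds because $\el(s,k'')=\el(s',k'')=0$. Once clauses (1) and (2) are verified for $t^*$, the uniqueness in Lemma \ref{lem:env} gives $t^*=\env(s',m)=e_2$, which is the desired equality.
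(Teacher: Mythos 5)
Your proof is correct, but it is organized differently from the paper's. The paper first uses Remark \ref{rem:el} (injectivity of the sequence coding), interpreted inside the model coded by $m$, to reduce the claim to the position-wise statement $\forall i\;\true(m,\gnn{\el(\dt{\env}(s,m)[\dt z/\dt{\vnum}(k,m)],\dt i)=\el(\dt{\env}(s[z/k],m),\dt i)})$, and then settles each position by a three-way case split ($i=\vnum(k,m)$; $i=\vnum(x,m)$ with $x\neq k$; $i$ outside the range of $\vnum(\cdot,m)$) and a chain of equalities combining Proposition \ref{prop:subs} with the uniform identity $\true(m,\gnn{\el(\dt{\env}(s,m),\dt{\vnum}(k,m))=\dt{\el}(s,k)})$ from Lemma \ref{lem:env}. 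You instead reduce the claim to a single equality $t^*=\env(s[z/k],m)$ and obtain it from the \emph{uniqueness} clause of Lemma \ref{lem:env}, by checking that $t^*$ satisfies the two defining conditions for the parameter $s[z/k]$. The underlying computations are the same, but your route has two distinguishing features. First, you make explicit the initial-segment lemma (every element of the coded model lying $M$-below $\vnum(s,m)$ is of the form $\vnum(k',m)$ with $k'<s$, and $\vnum(\cdot,m)$ is increasing and injective), proved by induction inside $\PA$; the paper's third case tacitly relies on exactly this fact in order to apply clause (2) of Lemma \ref{lem:env} to elements outside the range of $\vnum$, so spelling it out is a genuine gain in rigor. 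Second, because you go through clauses (1) and (2) separately rather than through the uniform identity valid for all $k$, you are forced into the degenerate analysis around $z=0$ and $k\geq s[z/k]$ (where $s[z/k]<s$); the paper's equality chains sidestep this case entirely. In short: your argument is sound, marginally longer where the paper is slicker, and more careful where the paper is implicit.
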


\begin{proof}[Proof of Proposition~\ref{prop:substitutions}]
Work in $\PA$ and assume $\fmod(m)$. Given $z,k,s$ we need to show $$\true(m, \gnn{\dt{\env}(s,m)[\dt{z}/\dt{\vnum}(k,m)]= \dt{\env}(s[z/k],m)}).$$
By Remark \ref{rem:el} and the definition of~$\fmod(m)$, this is equivalent to
\[\forall i \;\true(m, \gnn{
\el(\dt{\env}(s,m)[\dt{z}/\dt{\vnum}(k,m)],\dt i)=
\el(\dt{\env}(s[z/k],m),\dt i)})\]
We distinguish three cases: 
\begin{itemize}
\item $i = \vnum(k,m)$
\item $i = \vnum(x,m)$ for some $x\neq k$
\item none of the above, namely $i$ is a non-standard element of the model coded by $m$ 
\end{itemize}

\noindent In the first, case we have
\begin{align*}
& \true(m,\gnn{\el(\dt{\env}(s,m)[\dt{z}/\dt{\vnum}(k,m)],\dt i)  =  \dt{z}})  && \text{by Proposition \ref{prop:subs}}\\
& \true(m,\gnn{\dt{z} = \dt{\el}(s[z/k],k)}) && \text{by Lemma \ref{lem:env}}\\
& \true(m,\gnn{\dt{\el}(s[z/k],k) = \el(\dt{\env}(s[z/k],m),\dt i)}) && \text{by Proposition \ref{prop:subs}}
\end{align*}
and we conclude by transitivity of the equality inside the model coded by $m$. 

\noindent In the second case,
we have 
\begin{align*}
& \true(m,\gnn{ \el(\dt{\env}(s,m)[\dt{z}/\dt{\vnum}(k,m)],\dt i)=\el(\dt{\env}(s,m),\dt i)}) & & \text{by Proposition \ref{prop:subs}}\\
& \true(m,\gnn{ \el(\dt{\env}(s,m),\dt i)=\dt{\el}(s,x) }) & & \text{by Lemma \ref{lem:env}} \\
& \true(m,\gnn{ \dt{\el}(s,x)=\dt{\el}(s[z/k],x)}) & & \text{by Proposition \ref{prop:subs}}\\
& \true(m,\gnn{ \dt{\el}(s[z/k],x)=\el(\dt{\env}(s[z/k],m),\dt i) }) & & \text{by Lemma \ref{lem:env}}
\end{align*}
and we conclude again by transitivity of the equality.
	
\noindent In the third case,
\begin{align*}
& \true(m,\gnn{\el(\dt{\env}(s,m)[\dt{z}/\dt{\vnum}(k,m)],\dt i)  = \el(\dt{\env}(s,m),\dt i)}) && \text{by Proposition \ref{prop:subs}}\\
& \true(m,\gnn{\el(\dt{\env}(s,m),\dt i) = 0}) & & \text{by Lemma \ref{lem:env}}\\
& \true(m,\gnn{0=\el(\dt{\env}(s[z/k],m),\dt i) }) & & \text{by Lemma \ref{lem:env}} 
\end{align*}
and we conclude as above.
\end{proof}

		Recalling that $s+1 = \Pi_i p_i^{\el(s,i)}$ we can illustrate the definition of $\env$ by the following example.

		\begin{exa} Let $s +1 = 2^7 3^5$ and let $M$ be a $\Sigma^0_2$-model coded by $m$. Then $\env(s,m)$ is the unique element $t$ such that $M \models x_2+1 = 2^{x_0} 3^{x_1}$ in the environment $x_0\mapsto 7, x_1 \mapsto  5,x_2 \mapsto  t$.   Note that $7$ and $5$ are not necessarily equal to ${\ov 5}^M$ and ${\ov 7}^M$, so in general 
			$M \nmodels x_2+1 = 2^73^5$ in the environment $x_2\mapsto t$. 
		\end{exa}
		
		We are now ready to prove Propositions~\ref{prop:substitutions-easy}
		and~\ref{prop:substitutions}.

		\section{A model within a model}
		
		
		\begin{prop} \label{prop:inmodel}Let $X$ be a model of $\PA$ with domain $\N$. Given $y\in X$ such that $X\models \fmod(y)$, there is a model $Z\models \PA$ with domain $\N$ such that
			$$Z \models \phi\en{s} \iff X \models \sat(y, \gnn{\phi},t)$$ where $t= \Env(s,X)$.
		\end{prop}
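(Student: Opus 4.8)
The plan is to realise the model that $X$ believes is coded by $y$ as a genuine external structure on $\N$. I would let $Z$ be the structure with domain $\N$, the common domain of $X$, whose operations are the ones associated with $y$ inside $X$ by Definition \ref{defn:total}: set $0_Z := 0_y$, $S_Z := S_y$, $a +_Z b := a +_y b$ and $a \cdot_Z b := a \cdot_y b$, where $0_y, S_y, +_y, \cdot_y$ are computed in $X$. Since $X \models \fmod(y)$ entails that ``$y$ is total'', these are total functions on $\N$, so $Z$ is a well-defined first-order structure; the clause $0_y = 0$ of $\fmod$ moreover gives $0_Z = 0$.

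The core of the argument is the equivalence $Z \models \phi \en{s} \iff X \models \sat(y, \gnn{\phi}, \Env(s,X))$, which I would prove by external induction on the metatheoretic formula $\phi$. Because the induction is external, the code $\gnn{\phi}$ is always standard in $X$, so the Tarski clauses packaged in $\fmod(y)$ --- which $X$ proves for \emph{all} arithmetized formulas --- apply to it. I would first prove the term-level statement $\val(\tau, Z, s) = \fval(\gnn{\tau}, y, \Env(s,X))$ by induction on the term $\tau$: the variable case is exactly Lemma \ref{lem:Env}, giving $\el^X(\Env(s,X), \ov{i}^X) = \el(s,i)$, which is precisely the value that the environment $s$ assigns to $x_i$ in $Z$; the cases $0$, $\fS$, sum and product are immediate from the definition of the operations of $Z$ and the recursion clauses for $\fval$. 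The atomic case $t_1 = t_2$ then drops out of the $\feq$-clause of $\fmod(y)$, and the cases $\lnot \phi$ and $\phi \land \psi$ from the $\fnot$- and $\fand$-clauses.

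The main obstacle is the existential case $\exists x_i\, \phi$. The decisive observation is that the external quantifier ranges over $\N = \dom Z$, which is literally the same set as $\dom X$, so an external ``$\exists x \in \N$'' coincides with the internal existential quantifier that appears in the $\fexists$-clause of $\fmod(y)$. To align the environments I would invoke Proposition \ref{prop:substitutions-easy}, which yields $X \models \Env(s,X)[x/\ov{i}] = \Env(s[x/i],X)$ for all $x, i \in \N$. This lets me rewrite the statement $X \models \sat(y, \gnn{\phi}, \Env(s[x/i],X))$ supplied by the induction hypothesis as $X \models \sat(y, \gnn{\phi}, \Env(s,X)[x/\ov{i}])$; quantifying over $x \in \N$ and applying the $\fexists$-clause then collapses this to $X \models \sat(y, \fexists(i, \gnn{\phi}), \Env(s,X))$, which is $X \models \sat(y, \gnn{\exists x_i\, \phi}, \Env(s,X))$, as required.

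Finally, to see that $Z \models \PA$ I would use the last clause of Definition \ref{defn:tarski}: from $X \models \fmod(y)$ we get that $X$ satisfies $\text{Ax}_{\PA}(\phi) \to \sat(y, \phi, s)$ for all $\phi$ and $s$. Applying this to the standard code of each axiom $\alpha$ of $\PA$ and to every environment yields $X \models \sat(y, \gnn{\alpha}, \Env(s,X))$, whence $Z \models \alpha \en{s}$ for every $s$ by the equivalence just established. Thus $Z$ satisfies every axiom of $\PA$, and the construction is complete.
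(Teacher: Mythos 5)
Your proposal is correct and follows essentially the same route as the paper: both arguments verify Tarski's truth conditions for the satisfaction set $\{(\phi,s) : X \models \sat(y,\gnn{\phi},\Env(s,X))\}$ using the clauses of $\fmod(y)$, handle the existential quantifier by aligning environments via Proposition~\ref{prop:substitutions-easy}, and obtain $Z\models\PA$ from the $\text{Ax}_{\PA}$ clause. Your version is somewhat more explicit than the paper's (which leaves the atomic and term-level verifications to the reader) in that you actually build $Z$ from the operations $0_y, S_y, +_y, \cdot_y$ and carry out the term induction, but this is a difference of detail, not of method.
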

		\begin{proof}Let $X,y$ be as in the hypothesis. Let $\mathcal Z$ be the set of pairs $(\phi,s)$ such that $X\models \sat(y,\gnn{\phi},\Env(s,X))$. We need to prove that there is a model $Z$ of $\PA$ with domain $\N$ such that $Z \models \phi[s]\iff (\phi,s)\in \mathcal Z$. To this aim we need to check Tarski's truth conditions and verify that $\mathcal Z$ contains the axioms of $\PA$. The latter condition follows easily from the assumption $X\models \fmod(y)$. Let us check the truth condition for negation:
			\begin{align*}
				(\lnot \phi,s)\in \mathcal Z & \liff  X \models \sat(y, \gnn{\lnot \phi}, \Env(s,X))\\
				& \liff X \models \lnot \sat(y, \gnn{\phi},\Env(s,X))\\
				& \liff X \nmodels \sat(y,\gnn{\phi},\Env(s,X)) \\
				& \liff (\phi,s)\nin \mathcal Z
			\end{align*}
			where in the second equivalence we used the fact that $X \models \fmod(y)$. Similarly, we can verify Tarski's truth condition for the quantifier $\exists$:
			\begin{align*}
				(\exists x_k \phi, s) \in \mathcal Z & \liff  
				X \models \sat(y,\gnn{\exists x_k\phi},\Env(s,X)))\\
				& \liff X \models \exists x_0 \sat(y, \gnn{\phi},\Env(s,X)[x_0/\ov{k}])\\
				& \liff\exists z\in \N \; X \models \sat(y, \gnn{\phi},\Env(s,X)[z/\ov{k}])\\
				& \liff\exists z \in \N\; X \models \sat(y, \gnn{\phi},\Env(s[z/k],X))\\
				& \liff \exists z\in \N  \; (\phi, s[z/k])\in \mathcal Z 
			\end{align*}
			where in the fourth equivalence we used Proposition \ref{prop:substitutions-easy}. We leave the other verifications to the reader.  
		\end{proof}	
		
		In the above proposition if $X$ is a $\Sigma^0_2$-model, then $Z$ is also $\Sigma^0_2$. In the rest of the section we prove that there is a definable function which computes a code $ {}^xy$ of $Z$ given $y$ and a code $x$ for $X$. 
		
		\begin{prop}[PA] \label{prop:mod-in-mod} Given $x,y$, there is $z$ such that for all $\phi,s$, 
			\begin{align*} \sat(z, \phi, s) \quad   \iff\quad  \true(x, \; \gnn{\sat(\dt{y},\dt{\vnum}(\phi,x),\dt{\env}(s,x))})\end{align*}	
			We define ${}^xy$ as the minimal such $z$ and observe that the function $x,y\mapsto {}^xy$ is $\Pi^0_3$-definable.  
		\end{prop}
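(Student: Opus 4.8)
The displayed right-hand side defines, for fixed parameters $x,y$, a predicate $R(\phi,s)$ in the free variables $\phi,s$; this is precisely the internalisation of the ``model within a model'' construction of Proposition~\ref{prop:inmodel}, where the external satisfaction $X\models\sat(y,\gnn\phi,\Env(s,X))$ is replaced by $\true(x,\cdot)$ and the data $\gnn\phi,\Env(s,X)$ by $\vnum(\phi,x),\env(s,x)$. The plan is to show that $R$ is, provably in $\PA$, equivalent to a $\Sigma^0_2$-formula in $\phi,s$ with parameters $x,y$, and then to read off the code $z={}^xy$ from Corollary~\ref{cor:sat}. This $\Sigma^0_2$ reduction is not optional: since $\sat(z,\phi,s)$ is $\Sigma^0_2$, a code $z$ with $\sat(z,\phi,s)\liff R(\phi,s)$ can exist only if $R$ is itself $\Sigma^0_2$, so establishing this is the heart of the matter.

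First I would unwind the anti-quote and rewrite $R(\phi,s)$ as
\[\exists u,v\;\bigl[\,u=\vnum(\phi,x)\ \wedge\ v=\env(s,x)\ \wedge\ \true(x,\gnn{\sat(\dt y,\dt u,\dt v)})\,\bigr].\]
The last conjunct is $\Sigma^0_2$ in $(x,y,u,v)$: by definition of the dot it equals $\exists w\,[\el(w,0)=y\wedge\el(w,1)=u\wedge\el(w,2)=v\wedge\sat(x,\gnn{\sat(x_0,x_1,x_2)},w)]$, and $\sat$ is $\Sigma^0_2$ by Corollary~\ref{cor:sat}. The delicate point is the first two conjuncts. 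Although $\vnum$ and $\env$ are only $\Pi^0_3$-definable in general (Definition~\ref{defn:numv} and Lemma~\ref{lem:env}), the $\Pi^0_3$ complexity comes entirely from the uniqueness clauses hidden in the $\iota$-definitions of $0_x,S_x,+_x,\cdot_x$ in Definition~\ref{defn:total}. In the case of interest, namely when $x$ is total (in particular when $\fmod(x)$ holds, which is the only situation in which the proposition is applied), these functions are single-valued, so the graphs $u=\vnum(\phi,x)$ and $v=\env(s,x)$ can be expressed by merely asserting the existence of the relevant recursion witnesses, the uniqueness clauses being then redundant; this makes both graphs $\Sigma^0_2$. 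Hence the matrix is a conjunction of $\Sigma^0_2$-formulas and $R$, being $\exists u,v$ of a $\Sigma^0_2$-formula, is $\Sigma^0_2$.

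With $R$ rewritten as an explicit $\Sigma^0_2$-formula $\rho(x,y,\phi,s)$, I would apply Corollary~\ref{cor:sat} with the two parameters $x,y$ and the two free variables $\phi,s$: this yields, provably in $\PA$, a $z$ with $\sat(z,\phi,s)\liff\rho(x,y,\phi,s)\liff R(\phi,s)$ for all $\phi,s$. Since the code furnished by Corollary~\ref{cor:sat} is in fact an explicit (primitive recursive) function of $x,y$, existence of a matching $z$ is immediate. Taking ${}^xy$ to be the least such $z$, the matching condition $\forall\phi,s\,(\sat(z,\phi,s)\liff R(\phi,s))$ is a universal quantification over a Boolean combination of $\Sigma^0_2$-formulas, hence $\Pi^0_3$; passing to the least matching $z$ costs at most the complexity of this matching predicate, so $x,y\mapsto{}^xy$ is $\Pi^0_3$-definable.

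The main obstacle is the $\Sigma^0_2$ reduction of the second paragraph: one must see that the $\Pi^0_3$-valued functions $\vnum(\cdot,x)$ and $\env(\cdot,x)$, once their values are substituted (via the dot) into the outer $\Sigma^0_2$ satisfaction $\true(x,\cdot)$, contribute only their $\Sigma^0_2$ existence-of-witness content, the uniqueness obligations being discharged by the totality of $x$. Care is also needed to carry the whole argument out inside $\PA$ and to unwind the nested anti-quote conventions correctly; by comparison, the application of Corollary~\ref{cor:sat} and the final complexity count are routine.
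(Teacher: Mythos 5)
Your proof is correct and follows the same route as the paper: recognise the right-hand side as a $\Sigma^0_2$ predicate in $(\phi,s)$ with parameters $x,y$, apply Corollary~\ref{cor:sat} to obtain a code $z$, and take the least such $z$. The paper's own proof is essentially a two-line assertion of that $\Sigma^0_2$-definability, so your second paragraph supplies exactly the justification it leaves implicit (under totality of $x$ the uniqueness clauses in the $\iota$-definitions of $\vnum$ and $\env$ are redundant, so only their $\Sigma^0_2$ existence content survives), which is a worthwhile addition rather than a deviation.
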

		\begin{pf}
			Given $x,y$, the set $$\{(\phi,s) \mid
			\true(x, \; \gnn{\sat(\dt{y},\dt{\vnum}(\phi,x),\dt{\env}(s,x))})\}$$ is $\Sigma^0_2$-definable with parameters $x,y$, so by Corollary \ref{cor:sat} there is some $z$ which codes this set, and we take ${}^xy$ to be the minimal such $z$. It can be readily verified that $x,y\mapsto {}^xy$ is $\Pi^0_3$-definable. 
		\end{pf}

		\begin{thm}[PA]\label{thm:model-in-model}
			If $\fmod(x)$ and $\true(x, \gnn{\fmod (\dt y)})$, then $\fmod ({}^x y)$.
		\end{thm}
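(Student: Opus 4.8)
The plan is to verify, working in $\PA$ under the two hypotheses $\fmod(x)$ and $\true(x,\gnn{\fmod(\dt y)})$, each of the clauses defining $\fmod(z)$ for $z:={}^x y$ (Definition~\ref{defn:tarski}). The only bridge between $z$ on one side and $x,y$ on the other is the defining equivalence of Proposition~\ref{prop:mod-in-mod},
\[\sat(z,\phi,s)\iff\true(x,\gnn{\sat(\dt y,\dt{\vnum}(\phi,x),\dt{\env}(s,x))}),\]
so every clause will be transported, through this equivalence, into an assertion about the model $X$ coded by $x$; there I may invoke both that $X$ satisfies Tarski's conditions (from $\fmod(x)$) and that $X$ believes $y$ to do so (from $\true(x,\gnn{\fmod(\dt y)})$). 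In effect the whole argument is the arithmetization of Proposition~\ref{prop:inmodel}.

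First I would dispatch the arithmetical bookkeeping. The map $n\mapsto\vnum(n,x)$ sends a number to the value of its numeral in $X$; since $\fmod(x)$ provides $0_x=0$ together with the $\PA$ axioms inside $X$, this map is, provably in $\PA$, an initial embedding of the ambient numbers into $X$. Hence every recursive predicate and every naturally represented primitive recursive function is absolute along it. This yields in one stroke the commutation identities used at the top level of each clause, for instance $\vnum(\fnot(\phi),x)=\fnot(\vnum(\phi,x))$, $\vnum(\fexists(i,\phi),x)=\fexists(\vnum(i,x),\vnum(\phi,x))$ and $\vnum(\feq(a,b),x)=\feq(\vnum(a,x),\vnum(b,x))$, the right-hand sides being computed inside $X$, as well as the absoluteness of $\fm$, $\tm$ and $\text{Ax}_{\PA}$. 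Crucially, semantic values are never transported: by Lemma~\ref{lem:env} the entry of $\env(s,x)$ at place $\vnum(k,x)$ is the raw number $\el(s,k)$, so $z$ and the model $Y$ that $X$ attributes to $y$ share the very same domain of ambient numbers.

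With this in hand the propositional and atomic clauses fall out mechanically. For $\lnot$ and $\land$ I would push $\sat(z,\fnot(\phi),s)$ (respectively $\fand$) through the defining equivalence, commute $\vnum$ inward, apply the matching Tarski clause for $y$ inside $X$, and finally apply the same clause for $x$ itself to pull the connective out of the scope of $\true(x,-)$; this mirrors the computation in Proposition~\ref{prop:inmodel}. For totality, for $0_z=0$, and for the atomic clause $\sat(z,\feq(a,b),s)\liff\fval(a,z,s)=\fval(b,z,s)$, I would first check that the operations $0_z,S_z,+_z,\cdot_z$ coincide with those $X$ attributes to $y$ (again by the defining equivalence, the equality clause for $y$ in $X$, and the value of $\env$ at standard indices), deduce totality of $z$ from that of $y$ in $X$, and then establish the term-evaluation transport $\fval(a,z,s)=w\liff\true(x,\gnn{\fval(\dt{\vnum}(a,x),\dt y,\dt{\env}(s,x))=\dt w})$ by induction on the possibly non-standard term $a$. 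The axiom clause is then immediate: $\text{Ax}_{\PA}(\phi)$ transports to $X$ by absoluteness, and $\fmod(y)$ inside $X$ delivers $\sat(y,\vnum(\phi,x),\env(s,x))$.

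The heart of the argument, and the step I expect to be the main obstacle, is the existential clause. Unwinding $\sat(z,\fexists(i,\phi),s)$ and commuting $\vnum$ past $\fexists$, the $\exists$-clause for $y$ inside $X$ rewrites it as $\true(x,\gnn{\exists w\;\sat(\dt y,\dt{\vnum}(\phi,x),\dt{\env}(s,x)[w/\dt{\vnum}(i,x)])})$, whereupon the $\exists$-clause for $x$ exposes an ambient existential quantifier ranging over the common domain. The difficulty is that the witness is produced inside $X$ with the substitution $\env(s,x)[w/\vnum(i,x)]$ carried out there, whereas the goal $\sat(z,\phi,s[w/i])$ speaks of $\env(s[w/i],x)$. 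Reconciling the two is exactly the formalized substitution lemma, Proposition~\ref{prop:substitutions}, which supplies
\[\true(x,\gnn{\dt{\env}(s,x)[\dt w/\dt{\vnum}(i,x)]=\dt{\env}(s[w/i],x)});\]
with it the two existentials coincide and the clause closes. Orchestrating the interplay between the ambient quantifier, the transported (and possibly non-standard) variable index $\vnum(i,x)$, and the environment surgery performed inside $X$ is the delicate point, and it is precisely what the apparatus of the preceding sections was built to absorb.
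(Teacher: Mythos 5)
Your proposal is correct and follows essentially the same route as the paper's proof: transport each clause of Definition~\ref{defn:tarski} through the defining equivalence of Proposition~\ref{prop:mod-in-mod}, apply the Tarski conditions first for $y$ inside the model coded by $x$ and then for $x$ itself to pull the connective or quantifier out of the scope of $\true(x,-)$, and invoke Proposition~\ref{prop:substitutions} to reconcile the two environments in the existential clause. The only difference is that you make explicit the bookkeeping (the initial embedding $n\mapsto\vnum(n,x)$, the commutation of $\vnum$ with the syntactic operations, and the totality, atomic and axiom clauses) that the paper either dismisses as easy or leaves implicit in its displayed equivalences.
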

		\begin{pf} We need to show, inside $\PA$, that the class of all pairs $(\phi,s)$ such that $\sat({}^xy,\phi,s)$ satisfies Tarski's truth conditions and contains the arithmetized axioms of $\PA$. The latter property is easy, so we limit ourself to verify the clauses for $\lnot$ and $\exists$ in Tarski's truth conditions. 
			\begin{align*}
				\sat({}^xy, \fnot(\phi), s) & \liff  \true(x, \; \gnn{\sat(\dt{y},\dt{\vnum}(\fnot(\phi),x),\dt{\env}(s,x))})\\
				& \liff \true(x, \gnn{\lnot \sat(\dt{y}, \dt{\vnum}(\phi), \dt{\env}(s,x))}) \\
				& \liff \lnot \true(x, \gnn{\sat(\dt{y}, \dt{\vnum}(\phi), \dt{\env}(s,x))}) \\
				& \liff \lnot \sat({}^xy, \phi, s) 
			\end{align*}
			where in the second equivalence we used the fact that $\true(x,\fmod (\dt{y}))$ and in the third we used the hypothesis $\fmod(x)$. Similarly we have: 
			\begin{align*}
				\sat({}^xy, \fexists (k, \phi), s) & \liff  
				\true(x, \; \gnn{\sat(\dt{y},\dt{\vnum}(\fexists(k,\phi),x),\dt{\env}(s,x))})\\
				& \liff \true(x, \gnn{\exists x_0 \sat(\dt{y}, \dt{\vnum}(\phi,x), \dt{\env}(s,x)[x_0/\dt{\vnum}(k,x)])}) \\
				& \liff \exists z \true(x, \gnn{\sat(\dt{y}, \dt{\vnum}(\phi,x), \dt{\env}(s,x)[\dt{z}/\dt{\vnum}(k,x)])}) \\
				& \liff \exists z \true(x, \gnn{\sat(\dt{y}, \dt{\vnum}(\phi,x), \dt{\env}(s[z/k],x))}) \\
				& \liff \exists z \sat({}^xy, \phi, s[z/k]) 
			\end{align*}
			where the fourth equivalence makes use of the properties of $\env$ (Proposition \ref{prop:substitutions}). 
		\end{pf}

		
		\section{L\"ob's derivability conditions}
		
		\begin{defn} Given a closed formula of $\PA$, we let
			$\Box \phi$ be the formula $\forall x (\fmod (x) \to \true(x, \gnn{\phi})$. Note that $\Box \phi$ has complexity $\Pi^0_4$. 
		\end{defn}
		
		The first three points of the following result correspond to L\"ob's derivability conditions in \cite{Lob1995}. 
		
		\begin{thm} \label{thm:derivability}  Let $\phi, \psi$ be closed formulas of $\PA$. We have: 
			\begin{enumerate}
				\item If $\PA\vdash \phi$, then $\PA \vdash \Box \phi$ 
				\item $\PA \vdash \Box \phi \to \Box \Box \phi$ 
				\item $\PA \vdash \Box(\phi\to \psi) \to (\Box \phi \to \Box \psi)$ 
				\item $\N\models \Box \phi \implies \PA \vdash \phi$ 
			\end{enumerate}
		\end{thm}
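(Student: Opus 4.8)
The plan is to prove the four statements in order, drawing on the machinery developed in the preceding sections; most of the real work has already been packaged into earlier results, so each point should reduce to a short argument.

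For point (1), I would argue contrapositively at the semantic level, exactly as sketched in the introduction. Suppose $\PA \nvdash \Box\phi$, i.e.\ there is a model $X \models \PA$ with domain $\N$ in which $\Box\phi$ fails. By the definition of $\Box$, this gives $y \in X$ with $X \models \fmod(y)$ and $X \models \lnot\true(y,\gnn{\phi})$. Now invoke Proposition \ref{prop:inmodel}: it produces a model $Z \models \PA$ with domain $\N$ such that $Z \models \psi\en{s} \iff X \models \sat(y,\gnn{\psi},\Env(s,X))$ for all $\psi,s$. Applying this to the closed formula $\phi$ with $s = 0$ (so that $\Env(0,X)$ is the relevant environment, and closedness makes the environment irrelevant), we get $Z \nmodels \phi$, hence $\PA \nvdash \phi$. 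For point (3), I would reason directly inside $\PA$: fix $x$ with $\fmod(x)$ and assume $\true(x,\gnn{\phi\to\psi})$ and $\true(x,\gnn{\phi})$; since $\fmod(x)$ guarantees that $\sat(x,-,-)$ respects Tarski's truth conditions (in particular modus ponens for the defined connective $\to$), we obtain $\true(x,\gnn{\psi})$. As this holds for every such $x$, $\PA \vdash \Box\phi \to \Box\psi$. This point is indeed the most straightforward, as promised in the introduction.

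Point (2) is where the bulk of the construction is used, and I expect it to be the main obstacle, although the earlier theorems do most of the lifting. The goal is to prove, inside $\PA$, that $\Box\phi \to \Box\Box\phi$. Unfolding, $\Box\Box\phi$ reads $\forall x (\fmod(x) \to \true(x,\gnn{\Box\phi}))$, and since $\Box\phi$ is itself the formula $\forall u(\fmod(u)\to\true(u,\gnn{\phi}))$, the inner assertion $\true(x,\gnn{\Box\phi})$ says, reasoning inside the model coded by $x$, that every $y$ with $\fmod(y)$ satisfies $\true(y,\gnn{\phi})$. So, working in $\PA$ under the hypothesis $\Box\phi$, I fix $x$ with $\fmod(x)$ and must show $\true(x,\gnn{\Box\phi})$, i.e.\ $\true(x,\gnn{\fmod(\dt y)\to\true(\dt y,\gnn\phi)})$ for the relevant internal $y$. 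Here is where Theorem \ref{thm:model-in-model} enters: given $\fmod(x)$ and an internal $y$ with $\true(x,\gnn{\fmod(\dt y)})$, it yields $\fmod({}^x y)$, so ${}^x y$ is a genuine code of a $\Sigma^0_2$-model. The hypothesis $\Box\phi$ then applies to the code ${}^x y$, giving $\true({}^x y,\gnn\phi)$. Finally, by the defining property of ${}^x y$ in Proposition \ref{prop:mod-in-mod}, $\true({}^x y,\gnn\phi)$ translates back into the internal statement $\true(x,\gnn{\true(\dt y,\gnn\phi)})$, which is exactly the consequent needed inside $x$. Assembling these implications inside $\PA$ and discharging the quantifier over the internal $y$ yields $\true(x,\gnn{\Box\phi})$, and hence $\Box\Box\phi$. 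The delicate part is keeping the quote/anti-quote bookkeeping straight and ensuring that $\gnn\phi$, being standard, is correctly matched with $\vnum$ when passing through the coding; but this is precisely the content already verified in Theorem \ref{thm:model-in-model} and Proposition \ref{prop:mod-in-mod}.

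For point (4), I would again argue contrapositively in the metatheory. Suppose $\PA \nvdash \phi$. Since $\PA$ is recursively axiomatized and has no finite models, Fact \ref{fact:kleene} (applied to $\PA \cup \{\lnot\phi\}$, which is consistent) yields a model $M \models \PA$ with $M \models \lnot\phi$ whose elementary diagram is $\Delta^0_2$; in particular $M$ is a $\Sigma^0_2$-model with domain $\N$. By Proposition \ref{prop:code-model}(2), $M$ has a code $m \in \N$ with $\N \models \fmod(m)$, and since $M \models \lnot\phi$ we have $\N \models \lnot\true(m,\gnn\phi)$. This single witness $m$ shows $\N \nmodels \Box\phi$, completing the contrapositive. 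I would remark, as the text does, that this is the only point relying on Fact \ref{fact:kleene}, and that the converse direction of (4) is already subsumed by (1).
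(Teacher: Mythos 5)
Your proposal is correct and follows essentially the same route as the paper: point (1) via Proposition \ref{prop:inmodel}, point (2) via Theorem \ref{thm:model-in-model} and Proposition \ref{prop:mod-in-mod}, point (3) directly from the definitions, and point (4) via Fact \ref{fact:kleene} and Proposition \ref{prop:code-model}. The only (cosmetic) difference is in point (2), where the paper passes to the dual operator $\Diamond$ and proves $\Diamond\Diamond\phi\to\Diamond\phi$, while you argue directly with $\Box$; the two arguments are contrapositives of each other and rely on the same two results.
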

		\begin{pf}
			(1) Suppose $\PA\nvdash \Box \phi$. Then there is a model $X\models \PA$ such that $X \models \lnot \Box \phi$. By definition this means that there is $y\in X$ such that $X\models \fmod(y)$ and $X \models \true(y, \gnn{\lnot \phi})$. By Proposition \ref{prop:inmodel} there is a model $Z\models \PA$ such that $Z\models \lnot \phi$, so $\PA\nvdash \phi$. 
			
			(2) 	We write $\Diamond \phi$ for $\lnot \Box \lnot \phi$ and observe that $\Diamond \phi$ is provably equivalent to $\exists x (\fmod (x) \land \true(x,\psi))$. The statement to be proved is equivalent to $\PA\vdash \Diamond \Diamond \phi \to \Diamond \phi$. Now $\Diamond \Diamond \phi$ says that there exist $x,y$ such that $\fmod(x)$, $\true(x, \gnn{\fmod (\dt{y})})$ and $\true(x, \gnn{\true (\dt{y},\gnn{\phi})})$.  On the other hand $\Diamond \phi$ says that there is $z$ such that $\fmod(z)$ and $\true(z, \gnn{\phi})$. To prove the implication one can take $z = {}^xy$ as defined in Proposition \ref{prop:mod-in-mod}. 
			
			(3) 		Clear from the definitions and the rules of predicate calculus,  recalling that $\Box \theta$ stands for $\forall x \; (\fmod (x) \to \true(x, \gnn{\theta}))$. 
			
			(4) Suppose $PA\nvdash \phi$. By Fact \ref{fact:kleene} there is a $\Sigma^0_2$ model $M$ satisfying $\lnot \phi$. Let $m\in \N$ be a code for such a model. Then $\sN \models \fmod(m)$ and $\N \models \true(m, \gnn{\lnot \phi})$. This is equivalent to  $\sN \models \lnot \Box \phi$.
		\end{pf}
		
		\section{An undecidable formula}
		By the diagonal lemma given a formula $\alpha(x)$ in one free variable there is a closed formula $\beta$ such that $\PA \vdash \beta \liff \alpha(\gnn{\beta})$.  Using the diagonal lemma we can define a formula $G$ which says ``I have no $\Sigma^0_2$-model'', as in the definition below.  
		
		\begin{defn}
			Let $G$ be such that $\PA \vdash G \liff \lnot \Box G$. 
		\end{defn}
		Using Theorem \ref{thm:derivability} we deduce that $G$ is undecidable and equivalent to $\lnot \Box \perp$ by the standard arguments, see for instance \cite{Boolos1994}. We give the details below. 
		\begin{lem}
			$\PA \nvdash G$. 
		\end{lem}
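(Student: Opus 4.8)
The plan is to run the standard unprovability half of the Gödel--Löb argument, using only the first derivability condition from Theorem~\ref{thm:derivability}. I would argue by contradiction: assume $\PA \vdash G$. Applying derivability condition~(1) to the closed formula $G$ immediately yields $\PA \vdash \Box G$. On the other hand, the defining fixed-point property $\PA \vdash G \liff \lnot \Box G$ together with the assumption $\PA \vdash G$ gives $\PA \vdash \lnot \Box G$. Thus $\PA$ proves both $\Box G$ and $\lnot \Box G$.

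To extract the contradiction I would invoke the semantic reading of $\vdash$ adopted throughout the paper: $\PA \vdash \theta$ means that $\theta$ holds in every model of $\PA$. Since $\PA$ does have a model---for instance the standard model $\N$---it cannot be the case that both $\Box G$ and $\lnot \Box G$ are true in every such model. This contradiction shows that the assumption $\PA \vdash G$ was untenable, hence $\PA \nvdash G$.

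I do not expect any genuine obstacle here. This is the easy, \emph{consistency-only} direction, and it requires neither condition~(2), nor~(3), nor~(4), nor Löb's theorem; the sole ingredients are derivability condition~(1), the fixed-point equivalence defining $G$, and the mere existence of a model of $\PA$. The one point worth keeping in mind is notational: because $\vdash$ is interpreted model-theoretically, the final step is phrased as ``$\PA$ has a model, so it cannot prove $\Box G \land \lnot \Box G$'' rather than in terms of any syntactic consistency statement---which is precisely in keeping with the paper's stated aim of avoiding a formalized notion of syntactic consistency.
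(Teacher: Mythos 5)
Your proposal is correct and follows exactly the paper's argument: derive $\PA\vdash\Box G$ from condition (1), derive $\PA\vdash\lnot\Box G$ from the fixed-point equivalence, and conclude by the consistency (equivalently, under the paper's semantic reading of $\vdash$, the existence of a model) of $\PA$. Your added remark about the model-theoretic interpretation of $\vdash$ is just an unpacking of what the paper means by ``contradicting the consistency of $\PA$''.
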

		\begin{pf}
			Suppose $\PA\vdash G$. Then $\PA \vdash \Box G$ (Theorem \ref{thm:derivability}). On the other hand by definition of $G$, $\PA \vdash \lnot \Box G$, contradicting the consistency of $\PA$. 
		\end{pf}
		
		\begin{lem}
			$\PA \vdash G \liff \lnot \Box \perp$. 
		\end{lem}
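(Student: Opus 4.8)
The plan is to prove $\PA \vdash G \liff \lnot \Box \perp$ by proving the two implications separately, both working inside $\PA$, using the defining property $\PA \vdash G \liff \lnot \Box G$ together with the derivability conditions from Theorem \ref{thm:derivability}. Since $G$ is already provably equivalent to $\lnot \Box G$, it suffices to show $\PA \vdash \Box G \liff \Box \perp$, or equivalently $\PA \vdash \lnot\Box G \liff \lnot \Box \perp$. I will establish the two directions of $\Box G \liff \Box \perp$.

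\begin{pf}
Since $\PA \vdash G \liff \lnot \Box G$, it suffices to prove $\PA \vdash \Box G \liff \Box \perp$.

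For the direction $\Box \perp \to \Box G$, note that $\PA \vdash \perp \to G$, so by condition (1) of Theorem \ref{thm:derivability} we have $\PA \vdash \Box(\perp \to G)$, and hence by (3), $\PA \vdash \Box \perp \to \Box G$.

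For the converse $\Box G \to \Box \perp$, work in $\PA$ and assume $\Box G$. By (2) we obtain $\Box \Box G$. On the other hand, from $\PA \vdash G \liff \lnot \Box G$ we get $\PA \vdash \Box G \to \lnot G$, so $\PA \vdash G \to (\Box G \to \perp)$, and by (1) and (3), $\PA \vdash \Box G \to \Box(\Box G \to \perp)$. Applying (3) once more yields $\PA \vdash \Box(\Box G \to \perp) \to (\Box \Box G \to \Box \perp)$. Combining these, from $\Box G$ we derive $\Box \Box G$ and $\Box(\Box G \to \perp)$, hence $\Box \perp$. Thus $\PA \vdash \Box G \to \Box \perp$.

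This establishes $\PA \vdash \Box G \liff \Box \perp$, and therefore $\PA \vdash G \liff \lnot \Box \perp$.
\end{pf}

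\noindent The main subtlety is the $\Box G \to \Box \perp$ direction, which is essentially the internal content of Löb's theorem: the point is that $\Box G$ lets us conclude both $\Box\Box G$ (by condition 2) and that $\PA$ internally proves $\Box G$ implies the falsity of $G$, which pushed under $\Box$ via conditions (1) and (3) yields $\Box\perp$. The other direction is immediate from the triviality $\perp \to G$ together with conditions (1) and (3). No use of Fact \ref{fact:kleene} is needed here, only the derivability conditions.
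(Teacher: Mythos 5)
Your proof is correct and follows essentially the same route as the paper: both directions are obtained from the fixed-point property $\PA\vdash G\liff\lnot\Box G$ together with conditions (1)--(3) of Theorem \ref{thm:derivability}, with the easy direction via the tautology $\perp\to G$ and the hard direction via $\Box\Box G$ from condition (2) and pushing the definition of $G$ under the box. The only difference from the paper's write-up is cosmetic bookkeeping (you derive $\Box(\Box G\to\perp)$ and detach against $\Box\Box G$, whereas the paper derives $\Box\lnot G$ and combines it with $\Box G$).
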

		\begin{pf}
			We use 1.--3. in Theorem \ref{thm:derivability}. Reason in $\PA$. 
			If $G$ holds, we get $\lnot \Box G$ by definition of $G$. Since $\perp \to G$ is a tautology we obtain $\Box \perp \to \Box G$, hence $\lnot \Box \perp$. 
			
			Now assume $\lnot G$. By definition of $G$ we get $\Box G$ and by point 2. in Theorem \ref{thm:derivability} $\Box \Box G$ follows. Moreover we have  $\Box (\Box G \liff \lnot G)$ (apply the definition of $G$ inside the $\Box$), so we get $\Box \lnot G$. Since we also have $\Box G$, we obtain $\Box \perp$.
		\end{pf}
		
		\begin{lem}
			$\PA \nvdash \lnot G$. 
		\end{lem}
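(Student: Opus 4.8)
The plan is to reduce the unprovability of $\lnot G$ to the unprovability of $\Box \perp$, and then dispatch the latter using point~4 of Theorem~\ref{thm:derivability}. By the preceding lemma we have $\PA \vdash G \liff \lnot \Box \perp$, whence $\PA \vdash \lnot G \liff \Box \perp$. Therefore it suffices to show $\PA \nvdash \Box \perp$, i.e.\ that $\PA$ cannot prove the nonexistence of a $\Sigma^0_2$-model of $\PA$.

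To prove $\PA \nvdash \Box\perp$ I would argue by contradiction. Suppose $\PA \vdash \Box \perp$. Recall our convention that $\PA \vdash \theta$ means that $\theta$ holds in every model of $\PA$; since the standard model $\N$ is one such model, this gives $\N \models \Box \perp$. Now apply point~4 of Theorem~\ref{thm:derivability} with $\phi = \perp$: from $\N \models \Box \perp$ we obtain $\PA \vdash \perp$, that is, $\perp$ holds in every model of $\PA$. But every model $M$ of $\PA$ satisfies $M \nmodels \perp$ by the convention fixed after Definition~\ref{defn:environment}, and $\N$ is a model of $\PA$, so this is absurd. Hence $\PA \nvdash \Box\perp$, and consequently $\PA \nvdash \lnot G$.

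I do not expect any genuine obstacle in this final lemma itself: it is a short corollary of point~4 of Theorem~\ref{thm:derivability} together with the consistency of $\PA$ (equivalently, the existence of the standard model $\N$). The only subtlety to flag is that all the real work has already been absorbed into point~4, whose proof invokes Fact~\ref{fact:kleene} in the metatheory to produce a $\Sigma^0_2$-model of any $\phi$ consistent with $\PA$; here we merely instantiate that machinery at $\phi = \perp$, where consistency of $\PA$ trivially supplies the needed model. Combined with the two previous lemmas ($\PA \nvdash G$ and $\PA \vdash G \liff \lnot\Box\perp$), this establishes that $G$, and hence $\lnot\Box\perp$, is independent of $\PA$, which is exactly the assertion that the existence of a $\Sigma^0_2$-model of $\PA$ is independent of $\PA$.
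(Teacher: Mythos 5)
Your proof is correct and is essentially the paper's argument: both hinge on point~4 of Theorem~\ref{thm:derivability} together with the consistency of $\PA$ (the existence of the standard model). The only cosmetic difference is that you first detour through the preceding lemma to replace $\Box G$ by $\Box\perp$ and then apply point~4 to $\phi=\perp$, whereas the paper applies point~4 directly to $\phi=G$ (from $\PA\vdash\lnot G$ one gets $\PA\vdash\Box G$, hence $\N\models\Box G$, hence $\PA\vdash G$, a contradiction), which makes the preceding equivalence lemma unnecessary for this step.
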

		\begin{pf}
			Suppose $\PA \vdash \lnot G$. Then by definition of $G$, $\PA \vdash \Box G$, so $\N \models \Box G$ and by Theorem \ref{thm:derivability}(4) $\PA \vdash G$, contradicting the consistency of $\PA$. 
		\end{pf}
		
		We have thus obtained: 
		
		\begin{thm}\label{thm:main}
			$\lnot \Box \perp$ is independent of $\PA$, namely $\PA$ does not prove that $\PA$ has a $\Sigma^0_2$-model.  
		\end{thm}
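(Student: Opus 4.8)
The plan is to recognize that Theorem~\ref{thm:main} requires no further construction: it is the synthesis of the three lemmas just proved. First I would unpack the statement. By definition $\Box\perp$ asserts that $\perp$ holds in every $\Sigma^0_2$-model of $\PA$, and since no model satisfies $\perp$, this says exactly that there is no $\Sigma^0_2$-model at all; hence $\lnot\Box\perp$ expresses precisely that $\PA$ has a $\Sigma^0_2$-model. Independence of $\lnot\Box\perp$ then means that neither $\lnot\Box\perp$ nor its negation $\Box\perp$ is a theorem of $\PA$, and I would prove the two unprovabilities separately.

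For the first half I would invoke the equivalence $\PA\vdash G\liff\lnot\Box\perp$ together with $\PA\nvdash G$: were $\PA$ to prove $\lnot\Box\perp$, the equivalence would force $\PA\vdash G$, contradicting $\PA\nvdash G$. Hence $\PA\nvdash\lnot\Box\perp$, which is the Gödel-second-incompleteness content of the theorem: $\PA$ cannot prove that it has a $\Sigma^0_2$-model.

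For the second half I would first negate both sides of the provable biconditional, obtaining $\PA\vdash\lnot G\liff\Box\perp$ (using $\PA\vdash\lnot\lnot\Box\perp\liff\Box\perp$, which is pure propositional logic). Assuming $\PA\vdash\Box\perp$, this equivalence would give $\PA\vdash\lnot G$, contradicting $\PA\nvdash\lnot G$. Hence $\PA\nvdash\Box\perp$, and combining the two halves yields the independence of $\lnot\Box\perp$ from $\PA$, which is the assertion of the theorem.

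There is no genuine obstacle at this final assembly, which is pure propositional bookkeeping over the three preceding lemmas; the substantive work lies earlier, in verifying the derivability conditions of Theorem~\ref{thm:derivability}, and in particular in the model-within-a-model construction underlying condition~(2). As for the asymmetry between the two halves, the unprovability of $\lnot\Box\perp$ rests only on condition~(1), through $\PA\nvdash G$, whereas the unprovability of $\Box\perp$ rests, through $\PA\nvdash\lnot G$, on condition~(4) and hence on Fact~\ref{fact:kleene}; this matches the remark in the introduction that Fact~\ref{fact:kleene} is needed only for $\PA\nvdash\Box\perp$ and not for $\PA\nvdash\lnot\Box\perp$.
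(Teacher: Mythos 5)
Your proposal is correct and matches the paper exactly: the paper gives no separate argument for Theorem~\ref{thm:main} beyond ``We have thus obtained,'' deriving it from the three preceding lemmas by precisely the propositional bookkeeping you describe. Your closing observation about which derivability conditions (and hence whether Fact~\ref{fact:kleene}) each half depends on also agrees with the remark in the paper's introduction.
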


		\subsubsection*{Acknowledgements}
		We thank the anonymous referee for his comments related to Fact \ref{fact:kleene}. 
		
				
		\bibliographystyle{alpha}
		
	\end{document}